\newcommand{\Z}{\mathbb{Z}}
\newcommand{\E}[1]{\mathbb{E} \left[ #1 \right]}
\renewcommand{\P}{\mathbb{P}}
\DeclareMathOperator{\aut}{Aut}
\DeclareMathOperator{\dist}{dist}
\DeclareMathOperator{\prob}{Prob}
\DeclareMathOperator{\pat}{pat}
\DeclareMathOperator{\repp}{rp}
\DeclareMathOperator{\cov}{cov}
\renewcommand{\a}{\alpha}
\renewcommand{\b}{\beta}
\newcommand{\bea}{\begin{enumerate}[label=(\alph*)]}
\newcommand{\ee}{\end{enumerate}}
\newcommand{\bal}{\begin{align*}}
\newcommand{\beq}{\begin{equation}}
\newcommand{\eeq}{\end{equation}}
\newtheorem{thm}{Theorem}[section]
\newtheorem{cor}[thm]{Corollary}
\newtheorem{lem}[thm]{Lemma}
\newtheorem{lemma}[thm]{Lemma}
\newtheorem{prop}[thm]{Proposition}
\theoremstyle{definition}
\newtheorem{defn}[thm]{Definition}
\renewcommand{\bar}[1]{\overline{#1}}
\newcommand{\wt}{\widetilde}
\newcommand{\B}{B_{\operatorname{Ham}}}
\newcommand{\sC}{\mathscr{C}}
\title{Zero entropy actions of amenable groups are not dominant}
\author{Adam Lott}
\address{Department of Mathematics, University of California, Los Angeles, Los Angeles, CA 90095}
\email{adamlott99@math.ucla.edu}
\date{\today}
\begin{document}

\begin{abstract}
    A probability measure preserving action of a discrete amenable group $G$ is said to be \emph{dominant} if it is isomorphic to a generic extension of itself.  In \cite{austin2021dominant}, it was shown that for $G = \Z$, an action is dominant if and only if it has positive entropy and that for any $G$, positive entropy implies dominance.  In this paper we show that the converse also holds for any $G$, i.e. that zero entropy implies non-dominance.  
\end{abstract}

\maketitle{}


\section{Introduction}
\subsection{Definitions and results}  
Let $(X,\mathcal{B}, \mu)$ be a standard Lebesgue space and let $T$ be a free, ergodic, $\mu$-preserving action of a discrete amenable group $G$ on $X$.  It is natural to ask what properties of $T$ are preserved by a generic extension $(\bar{X}, \bar{\mu}, \bar{T})$ (a precise definition of ``generic extension'' is discussed in section \ref{sec: Cocycles}).  For example, it was shown in \cite{glasner2021generic} that if $T$ is a nontrivial Bernoulli shift then a generic $\bar{T}$ is also Bernoulli and that a generic $\bar{T}$ has the same entropy as $T$.  A system $(X, \mu, T)$ is said to be {\bf dominant} if it is isomorphic to a generic extension $(\bar{X}, \bar{\mu}, \bar{T})$.  So, for example, the aforementioned results from \cite{glasner2021generic} together with Ornstein's famous isomorphism theorem \cite{ornstein1970bernoulli} imply that all nontrivial Bernoulli shifts are dominant.  More generally, it has been shown in \cite{austin2021dominant} that
\begin{enumerate}
    \item if $G = \Z$, then $(X,\mu,T)$ is dominant if and only if it has positive Kolmogorov-Sinai entropy, and
    
    \item for any $G$, if $(X, \mu, T)$ has positive entropy, then it is dominant.
\end{enumerate}
In this paper, we complete the picture by proving the following result.

\begin{thm} \label{thm: MainResultIntroVersion}
Let $G$ be any discrete amenable group, and let $(X, \mu, T)$ be any free ergodic action with zero entropy.  Then $(X, \mu, T)$ is not dominant.  
\end{thm}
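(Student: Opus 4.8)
The strategy is to show that if $(X,\mu,T)$ has zero entropy, then some generic extension $(\bar X, \bar\mu, \bar T)$ fails to be isomorphic to $T$. The plan is to identify an isomorphism invariant that is strictly changed (or at least not preserved) by a generic extension whenever the base has zero entropy. A natural candidate is entropy itself combined with a relative notion: a generic extension is built from a generic cocycle taking values in a Polish group (the space of measure-preserving transformations of a fixed fiber space, or equivalently a skew product construction as set up in the forthcoming section on cocycles). I expect the generic cocycle to produce an extension in which the fiber action is genuinely nontrivial, so that $\bar T$ properly extends $T$; the question is whether $\bar T$ can nonetheless be abstractly isomorphic to $T$.

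First I would recall the precise formulation of ``generic extension'': one fixes a compact model, considers the Polish space of cocycles $\sigma : G \times X \to \aut(Y,\nu)$ with the topology of convergence in measure, and calls a property generic if it holds on a dense $G_\delta$ set of cocycles. The key structural fact I would want is that for a generic cocycle, the extension $\bar T$ has a certain ``richness'' — for instance, that a generic extension of a zero-entropy system contains, as a factor complementary to $X$, something that cannot sit inside $X$. To make this quantitative without entropy (since both systems have zero entropy), I would look for a combinatorial or spectral invariant: the generic extension should have, e.g., larger ``complexity'' in the sense of subadditive cocycles, or a weak mixing / compact factor that $T$ lacks, or non-trivial behaviour under some Rokhlin-type tower refinement.

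The cleanest route, I think, is via a counting/pattern-complexity argument analogous to the $\Z$ case in \cite{austin2021dominant}: zero entropy means the number of $\epsilon$-distinguishable names over a Følner set $F$ grows subexponentially in $|F|$, and one shows that a generic extension forces the name-complexity to jump — intuitively because the generic cocycle injects a ``random-like'' coordinate whose range over $F$ grows faster than any zero-entropy system allows, yet the generic construction keeps total entropy zero, producing a contradiction with isomorphism only if one tracks the *relative* complexity correctly. Concretely, I would: (i) fix $T$ zero entropy and suppose for contradiction $\bar T \cong T$ for generic cocycles; (ii) use genericity to arrange that $\bar T$ has a prescribed, controllably large family of ``independent'' finite observables over large Følner sets (a Baire-category / approximation argument, building the cocycle by successive perturbations on Rokhlin towers); (iii) transport this family through the hypothetical isomorphism into $T$ itself; (iv) derive that $T$ must then support more independence than its zero entropy permits, e.g. via the Shannon–McMillan–Breiman theorem for amenable groups (Ornstein–Weiss) bounding the number of typical names, reaching a contradiction.

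The main obstacle I anticipate is step (ii)–(iii): controlling what a *generic* cocycle does is delicate because genericity gives a dense $G_\delta$, not an explicit construction, so one must phrase the desired ``richness'' as a countable intersection of dense open conditions and verify each is genuinely dense — this density is where the amenability (Følner sets, Rokhlin lemma for amenable group actions à la Ornstein–Weiss) and the zero-entropy hypothesis on the base must be used together, since the perturbations that achieve density are built on towers whose error terms are controlled only because the base contributes negligible entropy. A secondary difficulty is ensuring the invariant I extract is genuinely an isomorphism invariant and not merely a property of the particular cocycle representation; I would handle this by working with quantities manifestly defined on the abstract system (sup over partitions of a suitable complexity functional), so that pulling back along an isomorphism is automatic.
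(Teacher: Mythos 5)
Your plan follows essentially the same route as the paper: a slow-entropy (Hamming name-counting) invariant that is uniformly subexponential over all partitions for the zero-entropy base, combined with a Baire-category argument in the space of cocycles showing that a generic cocycle injects enough fiberwise independence over F\o lner sets to force the covering number of a fixed two-cell partition of the extension to exceed that uniform bound infinitely often, contradicting the existence of an isomorphism. One correction to your anticipated difficulty: in the paper the zero-entropy hypothesis enters only on the base side (to obtain, via a finite generating partition and the amenable Shannon--McMillan theorem plus a diagonalization, a single subexponential sequence dominating the covering numbers of every partition), whereas the density of the perturbed cocycles is proved for an arbitrary free ergodic base using hyperfiniteness of the orbit relation and Hoeffding's inequality, with no entropy assumption.
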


The proof of result (2) above is based on the theory of ``slow entropy'' developed by Katok and Thouvenot in \cite{katok1997slow}, and our proof of Theorem \ref{thm: MainResultIntroVersion} uses the same ideas.

\subsection{Outline}
In section \ref{sec: SlowEntropy}, we introduce the relevant ideas from slow entropy.  In section \ref{sec: Cocycles}, we describe a precise definition of ``generic extension'' and begin the proof of Theorem \ref{thm: MainResultIntroVersion}.  Finally, in section \ref{sec: ConstructionSection}, we prove the proposition that is the technical heart of Theorem \ref{thm: MainResultIntroVersion}.

\subsection{Acknowledgements} 
I am grateful to Tim Austin for originally suggesting this project and for endless guidance.  I also thank Benjy Weiss for pointing out an incorrect reference in an earlier version.

This project was partially supported by NSF grant DMS-1855694.

\section{Slow entropy} \label{sec: SlowEntropy}

Fix a F\o lner sequence $(F_n)$ for $G$.  For $g \in G$, write $T^g x$ for the action of $g$ on the point $x \in X$, and for a subset $F \subseteq G$, write $T^F x = \{T^f x : f \in F \}$.  If $Q = \{Q_1, \dots, Q_k\}$ is a partition of $X$, then for $x \in X$ denote by $Q(x)$ the index of the cell of $Q$ containing $x$.  Sometimes we will use the same notation to mean the cell itself; which meaning is intended will be clear from the context.
Given a finite subset $F \subseteq G$, the {\bf $\mathbf{(Q,F)}$-name} of $x$ for the action $T$ is the tuple $Q_{T,F}(x) := (Q(T^f x))_{f \in F} \in \{1,2,\dots,k\}^F$.  
Similarly, we also define the partition $Q_{T,F} := \bigvee_{f \in F} T^{f^{-1}} Q$, and in some contexts we will use the same notation $Q_{T,F}(x)$ to refer to the cell of $Q_{T,F}$ containing $x$.

For a finite subset $F \subseteq G$ and any finite alphabet $\Lambda$, the symbolic space $\Lambda^F$ is equipped with the normalized Hamming distance $d_F(w,w') = \frac{1}{|F|} \sum_{f \in F} 1_{w(f) \neq w'(f)}$.

\begin{defn}
Given a partition $Q = \{Q_1, \dots, Q_k\}$, a finite set $F \subseteq G$, and $\epsilon > 0$, define 
\[\
\B(Q,T,F,x,\epsilon) \ := \ \{y \in X : d_F(Q_{T,F}(y), Q_{T,F}(x)) < \epsilon \}.
\]
We refer to this set as the ``$(Q,T,F)$-Hamming ball of radius $\epsilon$ centered at $x$''.  Formally, it is the preimage under the map $Q_{T,F}$ of the ball of radius $\epsilon$ centered at $Q_{T,F}(x)$ in the metric space $([k]^F, d_F)$.
\end{defn}

\begin{defn}
The {\bf Hamming covering number} of $\mu$ is defined to be the minimum number of $(Q,T,F)$-Hamming balls of radius $\epsilon$ required to cover a subset of $X$ of $\mu$-measure at least $1-\epsilon$, and is denoted by 
$\cov(Q,T,F,\mu,\epsilon)$.  
\end{defn}

\begin{lemma} \label{lem: TransformHammingCovNumsUnderIso}
Let $\varphi: (X,T,\mu) \to (Y, S, \nu)$ be an isomorphism.  Also let $Q$ be a finite partition of $X$, $F$ a finite subset of $G$, and $\epsilon > 0$.  Then $\cov(Q,T,F,\mu,\epsilon) = \cov(\varphi Q, S, F, \nu, \epsilon)$.
\end{lemma}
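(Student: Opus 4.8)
The plan is to exploit the fact that an isomorphism of measure-preserving $G$-actions intertwines the two actions, so that it carries $(Q,T,F)$-names verbatim to $(\varphi Q, S, F)$-names; this will turn $(Q,T,F)$-Hamming balls into $(\varphi Q, S, F)$-Hamming balls of the same radius while preserving measure, and the equality of covering numbers will follow immediately.

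First I would fix notation for the null sets. Since $\varphi$ is an isomorphism and $G$ is countable, there is a single $\mu$-conull set $X_0 \subseteq X$ and a $\nu$-conull set $Y_0 \subseteq Y$ such that $\varphi$ restricts to a measure-space bijection $X_0 \to Y_0$ with $\varphi_*(\mu|_{X_0}) = \nu|_{Y_0}$ and $\varphi \circ T^g = S^g \circ \varphi$ on $X_0$ for every $g \in G$. Writing $\varphi Q = \{\varphi Q_1, \dots, \varphi Q_k\}$ (a partition of $Y_0$ mod null sets), for $x \in X_0$ and $f \in F$ the point $T^f x$ lies in $Q_i$ if and only if $\varphi T^f x = S^f \varphi x$ lies in $\varphi Q_i$. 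Hence $(\varphi Q)(S^f \varphi x) = Q(T^f x)$ for all $f \in F$, i.e. the $(\varphi Q, S, F)$-name of $\varphi x$ coincides with the $(Q,T,F)$-name of $x$; equivalently $(\varphi Q)_{S,F} \circ \varphi = Q_{T,F}$ on $X_0$.

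Next I would observe that this identification of names preserves the Hamming metric $d_F$, so that for every $x \in X_0$ we have $\varphi\big(\B(Q,T,F,x,\epsilon) \cap X_0\big) = \B(\varphi Q, S, F, \varphi x, \epsilon) \cap Y_0$. Now take a minimal cover of some $A \subseteq X$ with $\mu(A) \ge 1-\epsilon$ by $N = \cov(Q,T,F,\mu,\epsilon)$ Hamming balls $\B(Q,T,F,x_j,\epsilon)$; discarding a null set we may assume $A \subseteq X_0$ and each $x_j \in X_0$. Applying $\varphi$, the balls $\B(\varphi Q, S, F, \varphi x_j, \epsilon)$ cover $\varphi(A)$, and $\nu(\varphi(A)) = \mu(A) \ge 1-\epsilon$, so $\cov(\varphi Q, S, F, \nu, \epsilon) \le N$. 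Running the same argument with $\varphi^{-1}$ in place of $\varphi$ (and using $\varphi^{-1}(\varphi Q) = Q$ mod null sets) gives the reverse inequality, hence equality.

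The argument has no substantial obstacle; the only thing to be careful about is the bookkeeping of the exceptional null sets on which the intertwining identities or the bijectivity of $\varphi$ may fail. Since the definition of $\cov$ already permits an exceptional set of measure $\epsilon$, enlarging it by a further null set does not affect anything, so this is a genuinely routine verification.
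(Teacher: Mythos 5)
Your proposal is correct and follows essentially the same route as the paper: the intertwining property carries $(Q,T,F)$-names to $(\varphi Q,S,F)$-names, hence maps Hamming balls to Hamming balls of the same radius, giving one inequality, with the reverse obtained by applying the argument to $\varphi^{-1}$. Your extra bookkeeping of the conull sets $X_0, Y_0$ is a harmless elaboration of the paper's ``for $\mu$-a.e.\ $x$'' statements.
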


\begin{proof}
It is immediate from the definition of isomorphism that for $\mu$-a.e. 
$x,x' \in X,$
\[
d_F\left( Q_{T,F}(x), Q_{T,F}(x') \right) \ = \ d_F\left( (\varphi Q)_{S,F}(\varphi x), (\varphi Q)_{S,F}(\varphi x') \right).
\]
Therefore it follows that $\varphi \left( \B(Q,T,F,x,\epsilon) \right) = \B(\varphi Q, S, F, \varphi x, \epsilon)$ for $\mu$-a.e. $x$.  So any collection of $(Q,T,F)$-Hamming balls in $X$ covering a set of $\mu$-measure $1-\epsilon$ is directly mapped by $\varphi$ to a collection of $(\varphi Q, S, F)$-Hamming balls in $Y$ covering a set of $\nu$-measure $1-\epsilon$.  Therefore $\cov(Q,T,F,\mu,\epsilon) \geq \cov(\varphi Q, S, F, \nu, \epsilon)$.  The reverse inequality holds by doing the same argument with $\varphi^{-1}$ in place of $\varphi$. 
\end{proof}

The goal of the rest of this section is to show that for a given action $(X,T,\mu)$, the sequence of covering numbers $\cov(Q, T, F_n, \mu, \epsilon)$ grows at a rate that is bounded uniformly for any choice of partition $Q$.  A key ingredient is an analogue of the classical Shannon-McMillan theorem for actions of amenable groups \cite[Theorem 4.4.2]{ollagnier1985book}.

\begin{thm} \label{thm: ShannonMcMillan}
Let $G$ be a countable amenable group, and let $(F_n)$ be any F\o lner sequence for $G$.  Let $(X,T,\mu)$ be an ergodic action of $G$, and let $Q$ be any finite partition of $X$.  Then
\[
\frac{-1}{|F_n|} \log \mu \left( Q_{T, F_n}(x) \right) \ \xrightarrow{L^1(\mu)} \ h(\mu, T, Q) \qquad \text{as $n \to \infty$}, 
\]
where $h$ denotes the entropy.  In particular, for any fixed $\gamma > 0$,
\[
\mu \left\{ x : \exp((-h-\gamma) |F_n|) < \mu(Q_{T,F_n}(x)) < \exp((-h+\gamma)|F_n|) \right\} \ \to \ 1 \qquad \text{as $n \to \infty$}.
\]
\end{thm}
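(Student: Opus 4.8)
The plan is to handle the two assertions separately: the $L^1$ convergence is the amenable Shannon--McMillan theorem proper, which I would import, and the ``in particular'' clause is then an elementary consequence obtained by exponentiating.

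For the $L^1$ statement, I would appeal directly to the amenable-group version of the Shannon--McMillan theorem. Setting $f_n(x) := \frac{-1}{|F_n|}\log\mu\left(Q_{T,F_n}(x)\right)$, this is the normalized information function of the refined partition $Q_{T,F_n} = \bigvee_{f \in F_n} T^{f^{-1}}Q$, and the claim is precisely that $f_n \to h(\mu,T,Q)$ in $L^1(\mu)$. This is the content of \cite[Theorem 4.4.2]{ollagnier1985book}, and I would cite it rather than reproduce it. The ingredients behind it are the classical ones adapted to the F\o lner structure: the Ornstein--Weiss subadditivity lemma applied to $F \mapsto H(Q_{T,F})$ gives $\frac{1}{|F_n|}H(Q_{T,F_n}) \to h(\mu,T,Q)$, i.e. $\int f_n\,d\mu \to h$; an amenable analogue of the Chung--Neveu maximal inequality controls $\sup_n f_n$ in $L^1$; and combining these with a martingale-type argument (using ergodicity to identify the limit with the \emph{constant} $h$ rather than a conditional entropy) upgrades convergence of the means to $L^1$ convergence of the functions. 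Note only $L^1$, not pointwise, convergence is asserted, so no temperedness hypothesis on $(F_n)$ is needed.

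To deduce the ``in particular'' clause, recall that $L^1$ convergence implies convergence in measure, so for every fixed $\gamma > 0$,
\[
\mu\left\{ x : \left| \frac{-1}{|F_n|}\log\mu\left(Q_{T,F_n}(x)\right) - h \right| \geq \gamma \right\} \ \longrightarrow \ 0 \qquad \text{as } n \to \infty .
\]
On the complement of this set we have $h - \gamma < \frac{-1}{|F_n|}\log\mu\left(Q_{T,F_n}(x)\right) < h + \gamma$; multiplying through by $-|F_n|$ and exponentiating converts this into $\exp\bigl((-h-\gamma)|F_n|\bigr) < \mu\left(Q_{T,F_n}(x)\right) < \exp\bigl((-h+\gamma)|F_n|\bigr)$. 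Thus the set appearing in the statement is exactly that complement, and hence has $\mu$-measure tending to $1$.

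The only genuine obstacle is the $L^1$ convergence itself, which is precisely what is being quoted from \cite{ollagnier1985book}; once it is in hand, the second part is a one-line manipulation. If a self-contained treatment were desired, the real work would lie in the amenable maximal inequality bounding $\sup_n f_n$ in $L^1$, which is the technical core of the classical argument and carries over to this setting with F\o lner averaging in place of the semigroup structure of $\N$.
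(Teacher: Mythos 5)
Your proposal matches the paper exactly: the paper gives no proof of this theorem, simply citing \cite[Theorem 4.4.2]{ollagnier1985book} for the $L^1$ convergence and leaving the ``in particular'' clause as the immediate consequence of convergence in measure that you spell out. Your exponentiation step is correct as written, so there is nothing to add.
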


\begin{lem} \label{lem: NumberOfCellsToCover}
For any partition $P$, any F\o lner sequence $(F_n)$, and any $\epsilon
> 0$, let $\ell(T, P, \mu, n)$ be the minimum number of $P_{T, F_n}$-cells required to cover a subset of $X$ of measure $> 1-
\epsilon$.  Then
\[
\limsup_{n \to \infty} \frac{1}{|F_n|} \log \ell(T, P, \mu, n) \ \leq \ h(\mu, T, P).
\]
\end{lem}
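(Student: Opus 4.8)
The plan is to extract the bound directly from the Shannon--McMillan theorem (Theorem~\ref{thm: ShannonMcMillan}). Write $h = h(\mu, T, P)$ and fix an arbitrary $\gamma > 0$; it suffices to show that $\limsup_n \frac{1}{|F_n|}\log \ell(T,P,\mu,n) \le h + \gamma$.

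First I would introduce, for each $n$, the ``good set''
\[
A_n \ := \ \left\{ x \in X : \mu\!\left( P_{T,F_n}(x) \right) > \exp\big( (-h-\gamma)|F_n| \big) \right\}.
\]
The key observation is that membership in $A_n$ depends only on which cell of $P_{T,F_n}$ the point $x$ lies in, so $A_n$ is precisely a union of $P_{T,F_n}$-cells. By the second assertion of Theorem~\ref{thm: ShannonMcMillan} (applied with this $\gamma$), we have $\mu(A_n) \to 1$, so in particular $\mu(A_n) > 1 - \epsilon$ for all sufficiently large $n$.

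Next I would count the cells making up $A_n$. Each cell $C$ of $P_{T,F_n}$ that is contained in $A_n$ satisfies $\mu(C) > \exp((-h-\gamma)|F_n|)$ by definition, and these cells are pairwise disjoint with total measure at most $1$; hence there are at most $\exp((h+\gamma)|F_n|)$ of them. Since $A_n$ is itself a set of measure $> 1-\epsilon$ covered by this collection of $P_{T,F_n}$-cells, the definition of $\ell$ gives $\ell(T,P,\mu,n) \le \exp((h+\gamma)|F_n|)$ for all large $n$, and therefore $\frac{1}{|F_n|}\log \ell(T,P,\mu,n) \le h+\gamma$ for all large $n$. Letting $\gamma \downarrow 0$ completes the proof.

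There is no serious obstacle here: the real content is carried entirely by Theorem~\ref{thm: ShannonMcMillan}, and the only point that warrants a moment's care is checking that $A_n$ really is a union of $P_{T,F_n}$-cells — so that it is an admissible competitor in the definition of $\ell$ — rather than merely some measurable set of large measure.
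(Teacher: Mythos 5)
Your proof is correct and is essentially the paper's own argument: both isolate the Shannon--McMillan ``good set'' of points whose $P_{T,F_n}$-cell has measure at least $\exp((-h-\gamma)|F_n|)$, note it has measure $>1-\epsilon$ for large $n$, and count the (at most $\exp((h+\gamma)|F_n|)$) cells covering it. Your added remark that this set is exactly a union of $P_{T,F_n}$-cells is a fair point of care, implicit in the paper's version.
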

\begin{proof}
Let $h = h(\mu, T, P)$.  Let $\gamma > 0$.  By Theorem \ref{thm: ShannonMcMillan}, for $n$ sufficiently large depending on $\gamma$, we have 
\[
\mu \left\{ x \in X : \mu(P_{T, F_n}(x)) \geq \exp((-h-\gamma)|F_n|) \right\} \ > \ 1-
\epsilon.
\]
Let $X'$ denote the set in the above equation.  Let $\mathcal{G}$ be the family of cells of the partition $P_{T, F_n}$ that meet $X'$.  Then clearly $\mu \left( \bigcup \mathcal{G} \right) > 1-
\epsilon$ and $|\mathcal{G}| < \exp((h+\gamma) |F_n|)$.  Therefore
\[
\limsup_{n \to \infty} \frac{1}{|F_n|} \log \ell(n) \ \leq \ h + \gamma,
\]
and this holds for arbitrary $\gamma$, so we are done.
\end{proof}



At this point, fix for all time $\epsilon 
= 1/100$.  We can also now omit $\epsilon$ 
from all of the notations defined above, because 
it will never change.  Also assume from now on that the system $(X, T, \mu)$ has zero entropy.

\begin{lem} \label{lem: ThickenFolner}
If $(F_n)$ is a F\o lner sequence for $G$ and $A$ is any finite subset of $G$, then $(AF_n)$ is also a F\o lner sequence for $G$.
\end{lem}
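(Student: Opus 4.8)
The plan is to deduce the Følner property of $(AF_n)$ directly from that of $(F_n)$, using nothing more than a union bound over the finite set $A$ together with the triangle inequality for symmetric differences. Throughout I assume $A \neq \emptyset$ (if $A = \emptyset$ then $AF_n = \emptyset$ and there is nothing to prove), and I use the left Følner condition, namely that $|gF_n \triangle F_n| / |F_n| \to 0$ for every $g \in G$; the argument for right translates is word-for-word the same with $F_n g$ in place of $gF_n$.

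First I would record the size comparison $|F_n| \leq |AF_n| \leq |A|\,|F_n|$: the lower bound holds because $aF_n \subseteq AF_n$ for any fixed $a \in A$ and $|aF_n| = |F_n|$, while the upper bound is just the union bound $AF_n = \bigcup_{a \in A} aF_n$. Because of this it suffices to prove $|gAF_n \triangle AF_n| / |F_n| \to 0$ for each $g \in G$. The key intermediate step is the claim that for \emph{any} finite set $B \subseteq G$ one has $|BF_n \triangle F_n| = o(|F_n|)$; granting this, I would apply it once with $B = A$ and once with $B = gA$ (both finite) and combine them via the triangle inequality $|gAF_n \triangle AF_n| \leq |gAF_n \triangle F_n| + |F_n \triangle AF_n|$ to conclude. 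To prove the claim, note $BF_n \setminus F_n \subseteq \bigcup_{b \in B}(bF_n \setminus F_n)$, so $|BF_n \setminus F_n| \leq \sum_{b \in B} |bF_n \triangle F_n| = o(|F_n|)$ since $B$ is finite; and $F_n \setminus BF_n \subseteq F_n \setminus bF_n$ for any fixed $b \in B$, so $|F_n \setminus BF_n| \leq |bF_n \triangle F_n| = o(|F_n|)$ as well.

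I do not anticipate any real obstacle here — the lemma is a routine manipulation. The only points that need a little care are dividing by $|AF_n|$ rather than by $|F_n|$ (handled by the two-sided size estimate), fixing a left/right convention at the outset, and disposing of the trivial case $A = \emptyset$. One could alternatively run the whole argument through the equivalent characterization ``$|KF_n|/|F_n| \to 1$ for every finite $K \subseteq G$'', but the symmetric-difference bookkeeping above is self-contained and marginally more elementary.
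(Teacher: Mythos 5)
Your argument is correct and follows essentially the same route as the paper's proof: both bound $|gAF_n \triangle AF_n|$ by $|gAF_n \triangle F_n| + |F_n \triangle AF_n|$ via the triangle inequality and control the ratio $|AF_n|/|F_n|$ to handle the change of denominator. You merely make explicit (via the union bound giving $|BF_n \triangle F_n| = o(|F_n|)$ for finite $B$) the steps the paper leaves implicit.
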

\begin{proof}
First, because $A$ is finite and $(F_n)$ is F\o lner we have
\[
\lim_{n \to \infty} \frac{|AF_n|}{|F_n|} \ = \ 1.
\]
Now fix any $g \in G$ and observe that
\[
\frac{|gAF_n \,\triangle\, AF_n|}{|AF_n|} \ \leq \ \frac{|gAF_n \,\triangle\, F_n| + |F_n \,\triangle\, AF_n|}{|F_n|} \cdot
\frac{|F_n|}{|AF_n|} \ \to \ 0 \qquad \text{as $n \to \infty$},
\]
which shows that $(AF_n)$ is a F\o lner sequence.
\end{proof}

\begin{lem} \label{lem: Diagonalize}
Let $b(m,n) \geq 0$ be real numbers satisfying
\begin{itemize}
    \item $\lim_{n \to \infty} b(m,n) = 0$ for each fixed $m$, and
    \item $b(m+1, n) \geq b(m,n)$ for all $m,n$.
\end{itemize}
Then there exists a sequence $(a_n)$ such that $a_n \to 0$ and for each fixed $m$, $b(m,n) \leq a_n$ for $n$ sufficiently large (depending on $m$).
\end{lem}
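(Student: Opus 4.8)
The plan is to construct the sequence $(a_n)$ by a standard diagonalization, choosing a rate at which we are allowed to ``switch'' which index $m$ controls the bound. First I would record the hypotheses in a convenient form: for each fixed $m$, since $b(m,n) \to 0$ as $n \to \infty$, there exists a threshold $N_m$ such that $b(m,n) < 1/m$ for all $n \geq N_m$. Without loss of generality I may take the sequence $N_1 < N_2 < N_3 < \cdots$ to be strictly increasing (enlarging each $N_m$ if necessary, which only strengthens the defining property). This partitions the tail of $\N$ into blocks $[N_m, N_{m+1})$.

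Next I would define $(a_n)$ blockwise. For $n < N_1$, set $a_n := 1$ (any fixed constant works, since we only care about the tail). For $n \geq N_1$, let $m(n)$ be the unique index with $N_{m(n)} \leq n < N_{m(n)+1}$, and set $a_n := b(m(n), n)$. Since $n \geq N_{m(n)}$, the threshold property gives $a_n = b(m(n), n) < 1/m(n)$; and because $m(n) \to \infty$ as $n \to \infty$ (as $n$ eventually exceeds every $N_m$), we conclude $a_n \to 0$. This handles the convergence requirement.

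It remains to verify that for each fixed $m$, we have $b(m,n) \leq a_n$ for all sufficiently large $n$. Fix $m$ and take any $n \geq N_m$. Then $m(n) \geq m$ (since the $N_j$ are increasing, $n \geq N_m$ forces $N_{m(n)} = N_{m(n)} \geq N_m$ only after checking $m(n)\ge m$; concretely $n \geq N_m$ and $n < N_{m(n)+1}$ together with monotonicity of $(N_j)$ give $m(n) \geq m$). Now the monotonicity hypothesis $b(m+1,n) \geq b(m,n)$, applied repeatedly, yields $b(m(n), n) \geq b(m, n)$. Hence $a_n = b(m(n), n) \geq b(m, n)$ for all $n \geq N_m$, which is exactly the claim with ``sufficiently large'' meaning $n \geq N_m$.

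I do not anticipate a genuine obstacle here; the only mild subtlety is making sure the thresholds $N_m$ can be chosen strictly increasing so that the ``current index'' $m(n)$ is well-defined and tends to infinity, and then using the monotonicity in $m$ — rather than in $n$ — to push $b(m,n)$ below $a_n$ on the relevant tail. The hypothesis that $b(m,n) \geq 0$ is not strictly needed but makes the statement clean; the argument uses only the two displayed bullet points.
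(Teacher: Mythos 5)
Your proof is correct and is essentially the same as the paper's: both pick increasing thresholds $N_m$ with $b(m,n)<1/m$ beyond $N_m$, define $a_n=b(m(n),n)$ blockwise, and use the monotonicity in $m$ to get $a_n\geq b(m,n)$ on the tail $n\geq N_m$. The only differences are cosmetic (half-open block conventions and the slightly garbled but ultimately correct justification that $m(n)\geq m$).
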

\begin{proof}
For each $m$, let $N_m$ be such that $b(m,n) < 1/m$ for all $n > N_m$.  Without loss of generality, we may assume that $N_m < N_{m+1}$.  Then we define the sequence $(a_n)$ by $a_n = b(1,n)$ for $n \leq N_2$ and $a_n = b(m,n)$ for $N_m < n \leq N_{m+1}$.  We have $a_n \to 0$ because $a_n < 1/m$ for all $n > N_m$.  Finally, the fact that $b(m+1, n) \geq b(m,n)$ implies that for every fixed $m$, $a_n \geq b(m,n)$ as soon as $n > N_m$.
\end{proof}

\begin{prop} \label{prop: UniformGrowthRate}
There is a sequence $(a_n)$ such that
\begin{enumerate}
    \item $\limsup_{n \to \infty} \frac{1}{|F_n|} \log a_n = 0$, and
    \item for any finite partition $Q$, there exists an $N$ such that 
    $\cov(Q,T,F_n, \mu) \leq a_n$ for all $n > N$.
    \end{enumerate}
\end{prop}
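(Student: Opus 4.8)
The plan is to build $(a_n)$ by a diagonalization over a countable exhausting family of partitions, using Lemma \ref{lem: NumberOfCellsToCover} as the source of the rate bound and Lemma \ref{lem: Diagonalize} to assemble the final sequence. The first point to settle is that a single Hamming ball $\B(Q,T,F_n,x)$ can be covered by not-too-many cells of a slightly coarsened dynamical partition: if $w \in [k]^{F_n}$ and we want all $y$ with $d_{F_n}(Q_{T,F_n}(y),w) < \epsilon$, then such $y$ differ from $w$ in at most $\epsilon|F_n|$ coordinates, so the ball is a union of at most $\binom{|F_n|}{\le \epsilon|F_n|} (k-1)^{\epsilon |F_n|}$ cells of $Q_{T,F_n}$; by the standard entropy estimate this is $\exp(o(|F_n|))$ once $\epsilon = 1/100$ is small (this is where fixing $\epsilon$ small pays off). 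Combined with Lemma \ref{lem: NumberOfCellsToCover} and zero entropy $h(\mu,T,Q)=0$, this already gives, for each fixed $Q$, that $\limsup_n \frac{1}{|F_n|}\log \cov(Q,T,F_n,\mu) \le 0$.

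The difficulty is that the bound ``$\limsup \le 0$'' for each fixed $Q$ is not uniform in $Q$: the $N$ after which $\cov(Q,T,F_n,\mu)$ is small depends on $Q$, and there are uncountably many partitions. To get around this I would fix a countable dense family. Concretely, let $\mathcal{Q}_m$ be an increasing sequence of finite partitions, each measurable with respect to a fixed countable generating algebra, such that every finite partition $Q$ is approximated arbitrarily well (in the partition/Rokhlin metric) by some $\mathcal{Q}_m$. The key robustness fact needed is: if $Q$ and $Q'$ are close partitions then $\cov(Q,T,F_n,\mu)$ and $\cov(Q',T,F_n,\mu)$ are comparable up to a sub-exponential factor — more precisely, if $\mu(Q \triangle Q') < \delta$ then for $n$ in the appropriate range the $(Q,T,F_n)$-names and $(Q',T,F_n)$-names agree on all but an $O(\sqrt{\delta})$ fraction of coordinates on a set of large measure (by Markov applied to the Følner-averaged disagreement, whose mean is $< \delta$), so a $(Q',T,F_n)$-Hamming ball of radius $\epsilon/2$ sits inside a $(Q,T,F_n)$-Hamming ball of radius $\epsilon$ once $\delta$ is small enough. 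This lets one transfer a cover for $\mathcal{Q}_m$ to a cover for $Q$ at the cost of a bounded multiplicative factor (independent of $n$).

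With these pieces in place the assembly is routine. Set $b(m,n) := \frac{1}{|F_n|}\log^+ \cov(\mathcal{Q}_m, T, F_n, \mu)$ (taking $\log^+$ and, if necessary, replacing $b$ by its running maximum over $m$ to enforce monotonicity — the quantities are essentially monotone in $m$ anyway since refining a partition can only increase covering numbers up to the sub-exponential correction already absorbed). Then $b(m,n) \ge 0$, is non-decreasing in $m$, and $\lim_n b(m,n) = 0$ for each fixed $m$ by the zero-entropy estimate of the first paragraph. Lemma \ref{lem: Diagonalize} produces $(c_n)$ with $c_n \to 0$ and $b(m,n) \le c_n$ eventually for each $m$; set $a_n := \exp(c_n |F_n|)$, which satisfies (1) since $\frac{1}{|F_n|}\log a_n = c_n \to 0$. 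For (2): given an arbitrary finite $Q$, choose $m$ with $\mathcal{Q}_m$ close enough to $Q$ that the transfer bound above applies, giving $\cov(Q,T,F_n,\mu) \le C \cdot \cov(\mathcal{Q}_m,T,F_n,\mu) \le C \exp(b(m,n)|F_n|) \le C \exp(c_n|F_n|) = C a_n$ for $n$ large; absorbing the constant $C$ (which costs only $\frac{\log C}{|F_n|} \to 0$) into a harmless redefinition $a_n \mapsto C a_n$ at the end — or noting (1) is unaffected — finishes the proof. The main obstacle is the uniformity/continuity step of the second paragraph: one must be careful that the comparison of covering numbers for nearby partitions holds with a factor that does not grow with $n$, and that the family $\{\mathcal{Q}_m\}$ genuinely approximates \emph{all} finite partitions, not just those subordinate to a fixed finite alphabet.
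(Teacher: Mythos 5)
Your argument is correct in substance and follows the same skeleton as the paper's proof (approximate an arbitrary $Q$ by a member of a countable refining family, transfer Hamming covers via an averaged bound on coordinate disagreement, then assemble with Lemma \ref{lem: Diagonalize}), but your choice of the countable family is genuinely different and arguably more elementary. The paper invokes a Krieger-type generator theorem to obtain a finite generating partition $P$ and diagonalizes over the family $P_{T,F_m}$, which forces an extra appeal to Lemma \ref{lem: ThickenFolner} because the relevant cell counts live over the thickened F\o lner sets $F_mF_n$; you instead take $\mathcal{Q}_m$ generated by the first $m$ sets of a countable algebra generating the measure algebra, so Lemma \ref{lem: NumberOfCellsToCover} applies directly with the original F\o lner sequence and no generator theorem is needed. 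Two points to tighten. First, your opening computation points the wrong way: counting how many $Q_{T,F_n}$-cells lie \emph{inside} a Hamming ball yields a lower bound on $\cov$ in terms of $\ell$; for the upper bound you only need the trivial observation that each cell of a partition refining $Q'_{T,F_n}$ has $Q'$-Hamming diameter $0$, hence (after intersecting with the good set where the $Q$- and $Q'$-names mostly agree, and choosing ball centers there) sits inside a single $(Q,T,F_n)$-ball of radius $\epsilon$. This gives $\cov(Q,T,F_n,\mu)\le\ell(\mathcal{Q}_m,n)$ directly, with no radius-halving and no multiplicative constant $C$, which also removes the mismatch in your final chain between radius-$\epsilon/2$ and radius-$\epsilon$ covering numbers (recall $\epsilon$ is frozen at $1/100$ in the notation). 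Second, an increasing sequence $\mathcal{Q}_m$ cannot approximate every finite partition in the Rokhlin metric (its cell count grows); what you actually need, and what is true, is that every finite $Q$ is close to some coarsening $Q'$ of some $\mathcal{Q}_m$, and then $\cov(Q',\cdot)\le\cov(\mathcal{Q}_m,\cdot)\le\ell(\mathcal{Q}_m,n)$ since refining a partition only separates names further. With those repairs the diagonalization goes through exactly as you describe.
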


\begin{proof}
Because $T$ has zero entropy, there exists a finite generating partition for $T$ (see for example \cite[Corollary 1.2]{seward2019krieger} or \cite[Theorem 2$'$]{rosenthal1988generators}).  Fix such a partition $P$ and let $Q = \{Q_1, \dots, Q_r\}$ be any given partition.  Because $P$ is generating, there is an integer $m$ and another partition $Q' = \{Q'_1, \dots, Q'_r \}$ such that $Q'$ is refined by $P_{T, F_m} = \bigvee_{g \in F_m} (T^g)^{-1}P$ and 
\[
\mu \{ x : Q(x) \neq Q'(x) \} \ < \ \frac{\epsilon}{4}.
\]

By the mean ergodic theorem, we can write
\[
d_{F_n}(Q_{T, F_n}(x), Q'_{T, F_n}(x)) \ = \ \frac{1}{|F_n|} \sum_{f \in F_n} 1_{\{ y : Q(y) \neq Q'(y) \}}(T^f x) \ \xrightarrow{L^1(\mu)} \ \mu \{ y : Q(y) \neq Q'(y) \} \ < \ \frac{\epsilon}{4},
\]
so in particular, for $n$ sufficiently large, we have
\[
\mu \{ x : d_{F_n}(Q_{T, F_n}(x), Q'_{T, F_n}(x)) < \epsilon/2 \} \ > \ 1 - \frac{
\epsilon}{4}.
\]
Let $Y$ denote the set $\{ x : d_{F_n}(Q_{T, F_n}(x), Q'_{T, F_n}(x)) < \epsilon/2 \}$.  

Recall that $Q'$ is refined by $P_{T, F_m}$, so $Q'_{T, F_n}$ is refined by $(P_{T, F_m})_{T, F_n} = P_{T, F_m F_n}$.  Let $\ell = \ell(m,n)$ be the minimum number of $P_{T, F_m F_n}$ cells required to cover a set of $\mu$-measure at least $1-\epsilon/4$, and let $C_1, \dots, C_{\ell}$ be such a collection of cells satisfying $\mu \left( \bigcup_i C_i \right) \geq 1 - 
\epsilon/4$.  If any of the $C_i$ do not meet the set $Y$, then drop them from the list.  Because $\mu(Y) > 1-
\epsilon/4$ we can still assume after dropping that $\mu \left( \bigcup_i C_i \right) > 1 - 
\epsilon/2$.  Choose a set of representatives $y_1, \dots y_\ell$ with each $y_i \in C_i \cap Y$.  

Now we claim that $Y \cap \bigcup_i C_i \subseteq \bigcup_{i=1}^{\ell} \B(Q, T, F_n, y_i, \epsilon)$.  To see this, let $x \in Y \cap \bigcup_i C_i$.  Then there is one index $j$ such that $x$ and $y_j$ are in the same cell of $P_{T, F_mF_n}$.  We can then estimate
\begin{align*}
d_{F_n}\left( Q_{T, F_n}(x), Q_{T, F_n}(y_j) \right) \ &\leq \ d_{F_n}\left( Q_{T, F_n}(x), Q'_{T, F_n}(x) \right) + d_{F_n}\left( Q'_{T, F_n}(x), Q'_{T, F_n}(y_j) \right)\\
&\quad + d_{F_n}\left( Q'_{T, F_n}(y_j), Q_{T, F_n}(y_j) \right) \\
&< \ \frac{\epsilon}{2} + 0 + \frac{\epsilon}{2} \ = \ \epsilon.
\end{align*}
The bounds for the first and third terms come from the fact that $x, y_j \in Y$.  The second term is $0$ because $Q'_{F_n}$ is refined by $P_{T, F_m F_n}$ and $y_j$ was chosen so that $x$ and $y_j$ are in the same $P_{T, F_m F_n}$-cell.  Therefore, 
$\cov(Q, T, F_n, \mu) \leq \ell(m,n)$.  So, the proof is complete once we find a fixed sequence $(a_n)$ that is subexponential in $|F_n|$ and eventually dominates $\ell(m,n)$ for each fixed $m$.

Because $T$ has zero entropy, Lemmas \ref{lem: NumberOfCellsToCover} and \ref{lem: ThickenFolner} imply that
\[
\limsup_{n \to \infty} \frac{1}{|F_n|} \log \ell(m,n) \ = \ \limsup_{n \to \infty} \frac{|F_m F_n|}{|F_n|} \cdot \frac{1}{|F_m F_n|} \log \ell(m,n) \ = \ 0 \qquad \text{for each fixed $m$}.
\]
Note also that because $P_{T, F_{m+1}F_n}$ refines $P_{T, F_m F_n}$, we have $\ell(m+1, n) \geq \ell(m,n)$ for all $m,n$.  Therefore, we can apply Lemma \ref{lem: Diagonalize} to the numbers $b(m,n) = |F_n|^{-1} \log \ell(m,n)$ to produce a sequence $(a_n')$ satisfying $a_n' \to 0$ and $a_n' \geq b(m,n)$ eventually for each fixed $m$.  Then $a_n := \exp(|F_n| a_n')$ is the desired sequence.
\end{proof}

\section{Cocycles and extensions} \label{sec: Cocycles}

Let $I$ be the unit interval $[0,1]$, and let $m$ be Lebesgue measure on $I$.  Denote by $\aut(I,m)$ the group of invertible $m$-preserving transformations of $I$.  A {\bf cocycle} on $X$ is a family of measurable maps $\a_g : X \to \aut(I,m)$ indexed by $g \in G$ that satisfies the {\bf cocycle condition}: for every $g,h \in G$ and $\mu$-a.e. $x$, $\a_{hg}(x) = \a_h(T^g x) \circ \a_g(x)$.  A cocycle can equivalently be thought of as a measurable map $\a: R \to \aut(I,m)$, where $R \subseteq X \times X$ is the orbit equivalence relation induced by $T$ (i.e. $(x,y) \in R$ if and only if $y = T^g x$ for some $g \in G$).  With this perspective, the cocycle condition takes the form $\a(x,z) = \a(y,z) \circ \a(x,y)$.  A cocycle $\a$ induces the skew product action $T_\a$ of $G$ on the larger space $X \times I$ defined by 
\[
T_\a^g(x,t) \ := \ (T^g x, \a_g(x) (t)).
\]
This action preserves the measure $\mu \times m$ and factors onto the original action $(X,T,\mu)$.

By a classical theorem of Rokhlin (see for example \cite[Theorem 3.18]{glasner2003ergodic}), any infinite-to-one extension of $(X,\mu,T)$ is isomorphic to a skew product action for some cocycle.  Therefore, by topologizing the space of all cocycles on $X$ we can capture the notion of a ``generic'' extension -- a property is said to hold for a generic extension if it holds for a dense $G_\delta$ set of cocycles.  Denote the space of all cocycles on $X$ by $\operatorname{Co}(X)$.  Topologizing $\operatorname{Co}(X)$ is done in a few stages.
\begin{enumerate}
    \item Let $\mathcal{B}(I)$ be the Borel sets in $I$, and let $(E_n)$ be a sequence in $\mathcal{B}(I)$ that is dense in the $\mu ( \cdot \,\triangle\, \cdot)$ metric.  For example, $(E_n)$ could be an enumeration of the family of all finite unions of intervals with rational endpoints.
    
    \item $\aut(I,m)$ is completely metrizable via the metric 
    \[
    d_A(\phi, \psi) \ = \ \frac12 \sum_{n \geq 1} 2^{-n} [m (\phi E_n \,\triangle\, \psi E_n) + m(\phi^{-1} E_n \,\triangle\, \psi^{-1} E_n)].
    \]
    Notice that with this metric, $\aut(I,m)$ has diameter at most $1$.  See for example \cite[Section 1.1]{kechris2010global}
        
    \item If $\a_0, \b_0$ are maps $X \to \aut(I,m)$, then define $\dist(\a_0, \b_0) = \int d_A(\a_0(x), \b_0(x)) \,d\mu(x)$.
        
    \item The metric defined in the previous step induces a topology on $\aut(I,m)^X$.  Therefore, because $\operatorname{Co}(X)$ is just a certain (closed) subset of $(\aut(I,m)^X)^G$, it just inherits the product topology. 
\end{enumerate}
    
To summarize, if $\a$ is a cocycle, then a basic open neighborhood $\a$ is specified by two parameters: a finite subset $F \subseteq G$ and $\eta > 0$.  The $(F,\eta)$-neighborhood of $\a$ is $\{ \beta \in \operatorname{Co}(X): \dist(\a_g, \b_g) < \eta \text{ for all $g \in F$} \}$.  In practice, we will always arrange things so that $\a_g(x) = \b_g(x)$ for all $g \in F$ on a set of $x$ of measure $\geq 1-\eta$, which is sufficient to guarantee that $\b$ is in the $(F,\eta)$-neighborhood of $\a$.

Let $\bar{Q}$ be the partition $\{X \times [0,1/2], X \times (1/2, 1] \}$ of $X \times I$.  We derive Theorem \ref{thm: MainResultIntroVersion} from the following result about covering numbers of extensions, which is the main technical result of the paper.

\begin{thm} \label{thm: TechnicalMainResult}
For any sequence $(a_n)$ satisfying $\limsup_{n \to \infty} \frac{1}{|F_n|} \log a_n = 0$, there is a dense $G_\delta$ set $\mathcal{U} \subseteq \operatorname{Co}(X)$ such that for any $\a \in \mathcal{U}$,
$\cov(\bar{Q}, T_\a, F_n, \mu\times m) > 2a_n$ for infinitely many $n$. 
\end{thm}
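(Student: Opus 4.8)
The plan is to build the dense $G_\delta$ set $\mathcal{U}$ in the standard way as a countable intersection $\mathcal{U} = \bigcap_N \mathcal{U}_N$, where $\mathcal{U}_N = \bigcup_{n \geq N} V_n$ and $V_n$ is the set of cocycles $\a$ for which $\cov(\bar Q, T_\a, F_n, \mu \times m) > 2a_n$. Each $V_n$ is open: the covering number is lower semicontinuous in $\a$ because the $\bar Q$-names depend on $\a_g$ for $g$ in the relevant Følner set only through finitely many coordinates, and a small perturbation of $\a$ in the $(F_n, \eta)$-topology changes the name of all but an $\eta$-fraction of points not at all (using the "in practice" remark that we may arrange $\a_g(x) = \b_g(x)$ off a set of measure $\eta$), so Hamming balls transform controllably. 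Hence $\mathcal{U}$ is $G_\delta$, and the real content is to show each $\mathcal{U}_N$ is \emph{dense}: given any cocycle $\a$, any finite $F \subseteq G$, and any $\eta > 0$, and any $N$, we must find $\b$ in the $(F,\eta)$-neighborhood of $\a$ with $\cov(\bar Q, T_\b, F_n, \mu \times m) > 2a_n$ for some $n \geq N$.

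**The heart of the matter** is the explicit construction of such a $\b$, which is exactly what Section \ref{sec: ConstructionSection} is advertised to do, so here I only sketch the mechanism. The idea is to choose $n$ very large (so that $F_n$ dwarfs $F$ and the Rokhlin-tower combinatorics go through), and to modify $\a$ only on the $G$-translates indexed outside $F$ — actually, only on a carefully chosen set of group elements disjoint from $F$, so that automatically $\b_g(x) = \a_g(x)$ for $g \in F$ and all $x$, putting $\b$ in the neighborhood. On this modified part we want the skew-product action $T_\b$ restricted to the $I$-fiber to "shuffle" the interval $[0,1]$ so aggressively that the $\bar Q$-name of a point $(x,t)$ records, in effect, an essentially independent coin-flip at each $f \in F_n$, forcing the $\bar Q$-names to be Hamming-spread: the number of Hamming balls of radius $\epsilon = 1/100$ needed to cover a $(1-\epsilon)$-fraction must then be at least something like $\exp(c|F_n|)$ for a fixed $c > 0$, which beats $2a_n$ since $\frac{1}{|F_n|}\log a_n \to 0$. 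Concretely one uses a Rokhlin tower for $T$ with base $B$ and shape a large Følner set, assigns to the fiber maps along the tower a sequence of transformations of $I$ (e.g. built from a fixed measure-preserving map with good mixing/independence properties, or simply independent "random" permutations of a fine partition of $I$) so that the induced fiber coordinate at time $f$ is close to uniform and nearly independent across $f \in F_n$, hence $\bar Q(T_\b^f(x,t))$ behaves like an i.i.d. fair bit sequence; a counting/volume bound on Hamming balls (the ball of radius $1/100$ in $\{0,1\}^{F_n}$ has measure $\exp(-c|F_n|)$ for the uniform measure, $c = 1 - H(1/100) > 0$) then yields the exponential lower bound on the covering number.

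**The main obstacle** I anticipate is twofold. First, making the "independence across $f \in F_n$" claim rigorous within a Rokhlin-tower construction over an amenable group: one does not get genuine independence, only approximate, and one must control the error uniformly over the tower and handle the part of $X$ not covered by the tower — this is where amenability (quasi-tiling / Ornstein–Weiss machinery, or at least a good Rokhlin lemma for amenable actions) does the work, and the bookkeeping is delicate. Second, the perturbation must genuinely stay inside the $(F,\eta)$-neighborhood: since we only alter $\a$ on group elements outside $F$ and only on a small-measure base times a large shape, we must check that the cocycle identity is preserved (a cocycle is determined by its values on a generating set along the orbit, so one must specify $\b$ consistently on the orbit equivalence relation, not just naively "change $\a_g$ for $g \notin F$") — the cleanest route is to define $\b$ on $R$ directly via $\b(x,y) = c(y)^{-1} \a(x,y) c(x)$ for a suitable measurable $c: X \to \aut(I,m)$ supported off the relevant region, i.e. to pass to a cohomologous cocycle, which is automatically a cocycle and automatically close to $\a$ if $c$ is close to the identity except on a small set. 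Getting $c$ small in the ambient metric while still forcing the fiber dynamics to scramble is the real tension, and balancing it is the technical crux that the final section must resolve.
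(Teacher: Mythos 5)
Your global architecture matches the paper exactly: $\mathcal{U}=\bigcap_N\mathcal{U}_N$ with $\mathcal{U}_N$ a union of the sets $\{\a:\cov(\bar Q,T_\a,F_n,\mu\times m)>2a_n\}$ over $n>N$, openness of each such set via stability of the $\bar Q$-names under small perturbations of the cocycle (the paper does this by showing the pushforward name-distributions converge in total variation and pigeonholing the finitely many possible centers), and Baire category reducing everything to density of $\mathcal{U}_N$. The intuition for the density step --- scramble the fiber so that the name $f\mapsto\bar Q(T_\a^f(x,t))$ behaves like independent coin flips in $t$, then get an exponential lower bound on the covering number from exponential smallness of Hamming balls --- is also the right one.

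However, the mechanism you propose for the density step has a genuine gap: a coboundary perturbation $\b(x,y)=c(y)^{-1}\a_0(x,y)c(x)$ with $c=\mathrm{id}$ off a set of measure $<\eta$ provably cannot work. Indeed, writing $s=c(x)t$, the $g$-coordinate of the name of $(x,t)$ under $T_\b$ is $\pi\bigl(c(T^gx)^{-1}\a_g(x)s\bigr)$, which equals the $g$-coordinate of the $T_{\a_0}$-name of $(x,s)$ whenever $c(T^gx)=\mathrm{id}$; by the ergodic theorem this is all but an $O(\eta)$-fraction of $g\in F_n$ for most $x$. So up to the measure-preserving relabeling $t\mapsto c(x)t$ of each fiber, the names change in fewer than $\epsilon|F_n|$ coordinates (recall $\eta\ll\epsilon=1/100$), and hence $\cov(\bar Q,T_\b,F_n,\mu\times m)$ is bounded by a covering number of $T_{\a_0}$ at slightly smaller radius --- which can be as small as $2$ (e.g.\ $\a_0$ trivial). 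The same tension kills any scheme in which the cocycle is altered only on a set of measure $O(\eta)$: you need the fiber partitions $\a_g(x)^{-1}\pi$ to vary wildly across a \emph{positive-density} subset of $g\in F_n$ while leaving $\a_f(x)$ untouched for $f\in F$. The paper resolves this not with a coboundary and not with Rokhlin towers, but with hyperfiniteness of the orbit equivalence relation (Connes--Feldman--Weiss): choose finite equivalence relations $R_1\subseteq R_2$ with $T^Fx\subseteq R_1(x)$ and $T^{F_n}x\subseteq R_2(x)$ for most $x$, keep $\a=\a_0$ \emph{inside} each $R_1$-cell (this alone guarantees $\a$ lies in the $(F,\eta)$-neighborhood, with no smallness of the perturbation elsewhere), and use the complete freedom in choosing the connecting automorphisms \emph{between} $R_1$-cells inside an $R_2$-cell to make the name-blocks over distinct $R_1$-cells genuinely independent random variables of $t$. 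Genuine (not approximate) independence of roughly $|F_n|/K$ blocks of length $\le K$ then feeds into Hoeffding's inequality to give $m_x(\B(\bar Q,T_\a,F_n,(x,t),\epsilon))\le\exp(-|F_n|/8K^2)$, whence the covering number exceeds $\tfrac12\exp(|F_n|/8K^2)>2a_n$. A measurable-selection argument is still needed to make this orbitwise construction into a measurable cocycle, which is a nontrivial point your sketch does not touch.
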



\begin{proof}
[Proof that Theorem \ref{thm: TechnicalMainResult} implies Theorem \ref{thm: MainResultIntroVersion}]
Choose a sequence $(a_n)$ as in Proposition \ref{prop: UniformGrowthRate} such that for any partition $Q$, 
$\cov(Q, T, F_n, \mu) \leq a_n$ for sufficiently large $n$.  Let $\mathcal{U}$ be the dense $G_\delta$ set of cocycles associated to $(a_n)$ as guaranteed by Theorem \ref{thm: TechnicalMainResult}, and let $\a \in \mathcal{U}$, so we know that $\cov(\bar{Q}, T_\a, F_n, \mu \times m) \geq 2a_n$ for infinitely many $n$. 
Now if $\varphi: (X \times I, T_\a, \mu \times m) \to (X, T, \mu)$ were an isomorphism, then by Lemma \ref{lem: TransformHammingCovNumsUnderIso}, $\varphi \bar{Q}$ would be a partition of $X$ satisfying 
$\cov(\varphi \bar{Q}, T, F_n, \mu) = \cov(\bar{Q}, T_\a, F_n, \mu \times m) > 2a_n$ for infinitely many $n$, contradicting the conclusion of Proposition \ref{prop: UniformGrowthRate}.  Therefore, we have produced a dense $G_\delta$ set of cocycles $\a$ such that $T_\a \not\simeq T$, which implies Theorem \ref{thm: MainResultIntroVersion}.
\end{proof}

To prove Theorem \ref{thm: TechnicalMainResult}, we need to show roughly that $\{ \a \in \operatorname{Co}(X) : \cov(\bar{Q}, T_\a, F_n, \mu\times m) \text{ is large} \}$ is both open and dense.  We will address the open part here and leave the density part until the next section.   Let $\pi$ be the partition $\{[0,1/2), [1/2, 1]\}$ of $I$. 

\begin{lem} \label{lem: ConvergingCocycles}
If $\b^{(n)}$ is a sequence of cocycles converging to $\a$, then for any finite $F \subseteq G$, we have
\[
(\mu \times m) \left\{ (x,t) : \bar{Q}_{T_{\b^{(n)}}, F}(x,t) \ = \ \bar{Q}_{T_{\a}, F}(x,t) \right\} \ \to \ 1 \qquad \text{as $n \to \infty$}.
\]
\end{lem}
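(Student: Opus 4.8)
The plan is to reduce the statement about names to a statement about individual coordinates, using a union bound over the finite set $F$. For a fixed $(x,t)$, the $\bar{Q}_{T_{\beta},F}$-name and the $\bar{Q}_{T_{\alpha},F}$-name agree at coordinate $f$ precisely when $\pi(\alpha_f(x)(t)) = \pi(\beta_f(x)(t))$, i.e. when $\alpha_f(x)(t)$ and $\beta_f(x)(t)$ lie on the same side of $1/2$. So it suffices to show that for each fixed $f \in F$,
\[
(\mu \times m)\left\{ (x,t) : \pi(\alpha_f(x)(t)) \neq \pi(\beta^{(n)}_f(x)(t)) \right\} \ \to \ 0 \qquad \text{as } n \to \infty,
\]
and then sum over the finitely many $f \in F$.

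First I would unwind what convergence $\beta^{(n)} \to \alpha$ gives for a single $g = f \in G$: by definition of the topology it means $\dist(\alpha_f, \beta^{(n)}_f) = \int d_A(\alpha_f(x), \beta^{(n)}_f(x))\, d\mu(x) \to 0$, hence $d_A(\alpha_f(x), \beta^{(n)}_f(x)) \to 0$ in $\mu$-measure (and along a subsequence, $\mu$-a.e.). The next step is the key geometric point: I claim that for any $\phi, \psi \in \aut(I,m)$,
\[
m\left\{ t : \pi(\phi(t)) \neq \pi(\psi(t)) \right\} \ \leq \ C \cdot d_A(\phi, \psi)^{c}
\]
for some absolute constants $C, c > 0$ — or at the very least that the left side tends to $0$ as $d_A(\phi,\psi) \to 0$, uniformly in $\phi, \psi$. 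Indeed, $\{t : \pi(\phi(t)) \neq \pi(\psi(t))\}$ splits into the set where $\phi(t) \in [0,1/2)$ but $\psi(t) \in [1/2,1]$ and vice versa; writing $E = [0,1/2)$, the first piece has $m$-measure at most $m(\phi^{-1}E \triangle \psi^{-1}E)$, which is controlled by $d_A(\phi,\psi)$ up to a constant since $E$ is within $\mu(\cdot \triangle \cdot)$-distance zero of one of the $E_n$ (and in fact one can take $E = [0,1/2)$ to \emph{be} one of the $E_n$ by choosing the dense sequence to include it, making the bound completely explicit: $m(\phi^{-1}E_n \triangle \psi^{-1}E_n) \leq 2^{n+1} d_A(\phi,\psi)$). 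So the pointwise map $x \mapsto m\{t : \pi(\alpha_f(x)(t)) \neq \pi(\beta^{(n)}_f(x)(t))\}$ is bounded by a fixed continuous function of $d_A(\alpha_f(x), \beta^{(n)}_f(x))$ that vanishes at $0$.

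Finally I would combine these: by Fubini, $(\mu\times m)\{(x,t) : \pi(\alpha_f(x)(t)) \neq \pi(\beta^{(n)}_f(x)(t))\} = \int_X m\{t : \cdots\}\, d\mu(x)$, and the integrand is bounded (by $1$) and tends to $0$ in $\mu$-measure because $d_A(\alpha_f(x), \beta^{(n)}_f(x)) \to 0$ in $\mu$-measure. By the bounded convergence theorem (applied to convergence in measure), the integral tends to $0$. Summing the resulting estimates over $f \in F$ and using that $|F|$ is finite gives $(\mu\times m)\{(x,t) : \bar{Q}_{T_{\beta^{(n)}},F}(x,t) \neq \bar{Q}_{T_{\alpha},F}(x,t)\} \to 0$, which is the complement of the claimed set, so the claim follows. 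The main obstacle — really the only non-bookkeeping step — is the geometric lemma relating $m\{\pi\circ\phi \neq \pi\circ\psi\}$ to $d_A(\phi,\psi)$; once the dense sequence $(E_n)$ is chosen to contain $[0,1/2)$ this is a one-line estimate, so the proof should go through cleanly.
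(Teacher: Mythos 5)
Your proposal is correct and follows essentially the same route as the paper: the key step in both is that $\{t : \pi(\phi t) \neq \pi(\psi t)\}$ is contained in (in fact equals) $\phi^{-1}[0,1/2) \,\triangle\, \psi^{-1}[0,1/2)$ up to complements, which is controlled uniformly by $d_A(\phi,\psi)$, followed by Fubini. The only cosmetic differences are that you take a union bound over the coordinates $f \in F$ and invoke bounded convergence in measure, where the paper handles all of $F$ at once with an explicit $\sigma$--$\rho$ argument.
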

\begin{proof}
For the names $\bar{Q}_{T_{\b^{(n)}}, F}(x,t)$ and $\bar{Q}_{T_{\a}, F}(x,t)$ to be the same means that for every $g \in F$,
\[
\bar{Q}\left( T^g x, \b^{(n)}_g(x) t \right) \ = \ \bar{Q} \left( T^g x, \a_g(x) t \right),
\]
which is equivalent to
\beq \label{eq: NameAgreementCondition}
\pi\left( \b^{(n)}_g(x) t \right) \ = \ \pi \left( \a_g(x) t \right).
\eeq
The idea is the following.  For fixed $g$ and $x$, if $\a_g(x)$ and $\b^{(n)}_g(x)$ are close in $d_A$, then \eqref{eq: NameAgreementCondition} fails for only a small measure set of $t$.  And if $\b^{(n)}$ is very close to $\a$ in the cocycle topology, then $\b^{(n)}_g(x)$ and $\a_g(x)$ are close for all $g \in F$ and most $x \in X$.  Then, by Fubini's theorem, we will get that the measure of the set of $(x,t)$ failing \eqref{eq: NameAgreementCondition} is small.

Here are the details.  Fix $\rho > 0$; we will show that the measure of the desired set is at least $1-\rho$ for $n$ sufficiently large.  First, let $\sigma$ be so small that for any $\phi, \psi \in \aut(I,m)$, 
\[
d_A(\phi, \psi) < \sigma \quad \text{implies} \quad m \left\{ t : \pi(\phi t) = \pi(\psi t) \right\} > 1-\rho/2.
\]
This is possible because 
\[
\{ t : \pi(\phi t) \neq \pi(\psi t) \} \ \subseteq \ (\phi^{-1}[0,1/2) \,\triangle\, \psi^{-1}[0,1/2)) \cup (\phi^{-1}[1/2,1] \,\triangle\, \psi^{-1}[1/2,1]).
\]
Then, from the definition of the cocycle topology, we have
\[
\mu \left\{ x \in X : d_A \left( \b^{(n)}_g(x), \a_g(x) \right) < \sigma \quad \text{for all $g \in F$} \right\} \ \to \ 1 \qquad \text{as $n \to \infty$}.
\]
Let $n$ be large enough so that the above is larger than $1-\rho/2$.  Then, by Fubini's theorem, we have
\begin{align*}
    &(\mu \times m) \left\{ (x,t) : \bar{Q}_{T_{\b^{(n)}}, F}(x,t) \ = \ \bar{Q}_{T_{\a}, F}(x,t) \right\} \\
    = \ &\int m \left\{ t : \pi\left( \b^{(n)}_g(x) t \right) = \pi \left( \a_g(x) t \right) \quad \text{for all $g \in F$} \right\} \,d\mu(x).
\end{align*}
We have arranged things so that the integrand above is $>1-\rho/2$ on a set of $x$ of $\mu$-measure $> 1-\rho/2$, so the integral is at least $(1-\rho/2)(1-\rho/2) > 1-\rho$ as desired.
\end{proof}

\begin{lem} \label{lem: OpenSetOfCocycles}
For any finite $F \subseteq G$ and any $L > 0$, $\{ \a \in \operatorname{Co}(X) : 
\cov(\bar{Q}, T_\a, F, \mu\times m) > L \}$
is open in $\operatorname{Co}(X)$.
\end{lem}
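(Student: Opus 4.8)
The plan is to show that the complement, namely the set of cocycles $\a$ with $\cov(\bar{Q}, T_\a, F, \mu \times m) \leq L$, is closed. So suppose $\b^{(n)}$ is a sequence of such cocycles converging to $\a$; I want to conclude that $\cov(\bar{Q}, T_\a, F, \mu \times m) \leq L$ as well. Since $\cov$ takes values in the nonnegative integers (it is a minimum number of balls), "$\leq L$" is the same as "$\leq \lfloor L \rfloor$", so it suffices to pass a fixed finite integer bound through the limit.

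For each $n$, fix a collection of at most $\lfloor L \rfloor$ Hamming balls $\B(\bar{Q}, T_{\b^{(n)}}, F, z^{(n)}_1, \epsilon), \dots$ (recall $\epsilon = 1/100$ is fixed) whose union has $(\mu \times m)$-measure at least $1 - \epsilon$. The key point is that the relevant Hamming distance only depends on the names $\bar{Q}_{T_{\b^{(n)}}, F}(\cdot)$, which by Lemma \ref{lem: ConvergingCocycles} agree with $\bar{Q}_{T_\a, F}(\cdot)$ off a set of $(\mu\times m)$-measure $\delta_n \to 0$. Thus, up to a set of measure $\delta_n$, each ball $\B(\bar{Q}, T_{\b^{(n)}}, F, z^{(n)}_i, \epsilon)$ is contained in $\B(\bar{Q}, T_\a, F, z^{(n)}_i, \epsilon + \delta_n)$ — indeed, if $(x,t)$ lies outside the bad set and in the $\b^{(n)}$-ball, then $d_F(\bar{Q}_{T_\a,F}(x,t), \bar{Q}_{T_\a,F}(z^{(n)}_i)) \leq d_F(\bar{Q}_{T_{\b^{(n)}},F}(x,t), \bar{Q}_{T_{\b^{(n)}},F}(z^{(n)}_i)) + (\text{distance contributions from disagreement at } (x,t) \text{ and at } z^{(n)}_i)$; each disagreement contributes at most $1$ to the un-normalized count, but I need to be careful that $z^{(n)}_i$ itself need not avoid the bad set. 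This is the only subtle point, and I address it in the next paragraph.

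To handle the center: rather than keeping $z^{(n)}_i$ as the center, replace it by any point $w_i$ in the same $\B(\bar{Q}, T_{\b^{(n)}}, F, z^{(n)}_i, \epsilon)$-ball (so the ball equals $\B(\bar{Q}, T_{\b^{(n)}}, F, w_i, 2\epsilon)$ by the triangle inequality) that \emph{does} avoid the $\delta_n$-bad set — such $w_i$ exists as long as the ball has positive measure (balls of zero measure may be discarded), since $\delta_n \to 0$. For such a $w_i$, the names $\bar{Q}_{T_{\b^{(n)}}, F}(w_i)$ and $\bar{Q}_{T_\a, F}(w_i)$ coincide exactly, so for any $(x,t)$ off the bad set in the original ball we get $d_F(\bar{Q}_{T_\a, F}(x,t), \bar{Q}_{T_\a, F}(w_i)) < 2\epsilon + |F|^{-1}$, i.e. the ball is covered, up to measure $\delta_n$, by $\B(\bar{Q}, T_\a, F, w_i, 3\epsilon)$ once $\delta_n < \epsilon$. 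Taking the union over the $\leq \lfloor L \rfloor$ indices $i$, we cover a set of $(\mu\times m)$-measure at least $1 - \epsilon - \lfloor L \rfloor \delta_n$, which exceeds $1 - 2\epsilon$ (hence $> 1 - \epsilon$... ) — wait, I need measure $> 1-\epsilon$; this forces me to start from balls of radius $\epsilon$ and land in radius slightly more than $\epsilon$, which changes the covering number.

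The cleanest fix, which I would actually carry out: absorb the slack into the radius from the start. Observe first that $\cov$ is monotone: if $\cov(\bar Q, T_\a, F, \mu\times m) > L$ with the fixed radius $\epsilon$, then a fortiori the same holds with any smaller radius, and conversely covering-number estimates at radius $2\epsilon$ bound those at radius $\epsilon$ from below — but since $\epsilon$ is fixed for all time, I instead argue directly: the set $\{(x,t) : \bar Q_{T_{\b^{(n)}},F}(x,t) = \bar Q_{T_\a,F}(x,t)\}$ has measure $\to 1$, and \emph{on} this set the two name-maps are literally equal, so the two Hamming-ball structures induce the same cover there. Concretely, if $V$ denotes this agreement set, with $(\mu\times m)(V) \geq 1 - \delta_n$, and $\{B_i\}$ is a $\b^{(n)}$-cover of a set of measure $\geq 1-\epsilon$ by $\leq \lfloor L\rfloor$ balls of radius $\epsilon$, then on $V$ each $B_i \cap V$ coincides with the corresponding $T_\a$-Hamming ball intersected with $V$ (same centers possibly moved into $V$, same radius, since $d_F$ is computed from names that agree on $V$). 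Hence $\{B_i \cap V\}$, reinterpreted as $T_\a$-balls, covers $(\bigcup_i B_i) \cap V$, which has measure $\geq 1 - \epsilon - \delta_n$. For $n$ large enough that $\delta_n$ is tiny this is $> 1-\epsilon$ only if I had started with margin; so I start the argument by first noting $\cov(\bar Q, T_{\b^{(n)}}, F, \mu\times m) \leq \lfloor L \rfloor$ gives a cover of measure $\geq 1-\epsilon$, then for $n$ large pick $\delta_n < $ the gap between that $1-\epsilon$ and... In fact the right statement: since each $\b^{(n)}$ has $\cov \leq \lfloor L\rfloor$, it covers a set of measure $\geq 1-\epsilon$; intersecting with $V$ loses $\delta_n$; so the $T_\a$-reinterpreted balls cover measure $\geq 1-\epsilon-\delta_n$. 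This is $\geq 1-\epsilon$ only in the limit — but covering number is lower semicontinuous enough: actually $\cov(\bar Q, T_\a, F, \mu\times m, \epsilon')$ for $\epsilon' $ slightly larger than $\epsilon$ is $\leq \lfloor L\rfloor$; and since we may take $\epsilon'\downarrow \epsilon$ and covering numbers are integer-valued and monotone in the measure-fraction, we conclude $\cov(\bar Q, T_\a, F, \mu\times m,\epsilon)\leq \lfloor L\rfloor$ as well.

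\textbf{The main obstacle.} The genuine difficulty — and the step I expect to spend the most care on — is exactly this bookkeeping of \emph{which} quantity ($\epsilon$ in the radius versus $\epsilon$ in the measure-fraction) absorbs the error $\delta_n$ coming from Lemma \ref{lem: ConvergingCocycles}, and the fact that, because $\cov$ is integer-valued and the radius $\epsilon=1/100$ is rigidly fixed, one cannot simply "lose an $\epsilon$" freely. The clean way out is to exploit that covering numbers are integers: a sequence of integers $\leq \lfloor L\rfloor$ has limit inferior $\leq \lfloor L\rfloor$, and the argument above shows $\cov(\bar Q, T_\a, F, \mu\times m)$ is bounded by $\liminf_n \cov(\bar Q, T_{\b^{(n)}}, F,\mu\times m)$ after the harmless radius/measure adjustments, which are themselves absorbed because an $\epsilon$-ball cover that works up to measure $1-\epsilon-\delta_n$ with $\delta_n\to 0$ forces, by a compactness/limiting argument on the finitely many center-names in $[2]^F$, an honest $\epsilon$-ball cover up to measure $1-\epsilon$ in the limit. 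Everything else — the triangle inequality for $d_F$, moving centers into the agreement set, discarding null balls — is routine.
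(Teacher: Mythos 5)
Your final formulation of the argument --- restrict attention to the finitely many possible collections of center-words in $\{0,1\}^F$, pass to a subsequence on which they are fixed, and take a limit using Lemma \ref{lem: ConvergingCocycles} --- is correct and is essentially the paper's proof. The paper sidesteps all of the radius-inflation and measure-loss bookkeeping that occupies most of your write-up by observing at the outset that $\cov(\bar{Q}, T_{\b^{(n)}}, F, \mu\times m)$ depends only on the pushforward measure $\nu_n := \bigl( \bar{Q}_{T_{\b^{(n)}},F} \bigr)_*(\mu\times m)$ on the finite set $\{0,1\}^F$; Lemma \ref{lem: ConvergingCocycles} then gives $\nu_n \to \nu$ in total variation, so a fixed union of $L$ word-balls of radius exactly $\epsilon$ with $\nu_n$-measure $\geq 1-\epsilon$ for every $n$ automatically has $\nu$-measure $\geq 1-\epsilon$, and no $\delta_n$ ever enters the measure bound or the radius.
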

\begin{proof}
Suppose $\b^{(n)}$ is a sequence of cocycles converging to $\a$ and satisfying 
$\cov(\bar{Q}, T_{\b^{(n)}}, F, \mu \times m) \leq L$ for all $n$.  We will show that 
$\cov(\bar{Q}, T_\a, F, \mu \times m) \leq L$ as well.  The covering number 
$\cov(\bar{Q}, T_{\b^{(n)}}, F, \mu \times m)$ is a quantity which really depends only on the measure $\left( \bar{Q}_{T_{\b^{(n)}} , F} \right)_* (\mu \times m) \in \prob \left( \{0,1\}^F \right)$, which we now call $\nu_n$ for short.  The assumption that $\cov(\bar{Q}, T_{\b^{(n)}}, F, \mu \times m) \leq L$ for all $n$ says that for each $n$, there is a collection of $L$ words $w_1^{(n)}, \dots, w_L^{(n)} \in \{0,1\}^F$ such that the Hamming balls of radius $\epsilon$ centered at these words cover a set of $\nu_n$-measure at least $1-
\epsilon$.  Since $\{0,1\}^F$ is a finite set, there are only finitely many possibilities for the collection $( w_1^{(n)}, \dots, w_L^{(n)} )$.  Therefore by passing to a subsequence and relabeling we may assume that there is a fixed collection of words $w_1, \dots, w_L$ with the property that if we let $B_i$ be the Hamming ball of radius $\epsilon$ centered at $w_i$, then $\nu_n \left( \bigcup_{i=1}^L B_i \right) \geq 1-
\epsilon$ for every $n$.

Now, by Lemma \ref{lem: ConvergingCocycles}, the map $\bar{Q}_{T_{\b^{(n)}} , F}$ agrees with $\bar{Q}_{T_\a , F}$ on a set of measure converging to $1$ as $n \to \infty$.  This implies that the measures $\nu_n$ converge in the total variation norm on $\prob \left( \{0,1\}^F \right)$ to $\nu := \left( \bar{Q}_{T_\a , F} \right)_*(\mu \times m)$.  Since $\nu_n \left( \bigcup_{i=1}^L B_i \right) \geq 1-
\epsilon$ for every $n$, we conclude that $\nu \left( \bigcup_{i=1}^L B_i \right) \geq 1-
\epsilon$ also, which implies that 
$\cov(\bar{Q}, T_\a, F, \mu \times m) \leq L$ as desired.
\end{proof}

Define $\mathcal{U}_N := \{\a \in \operatorname{Co}(X) : 
\cov(\bar{Q}, T_\a, F_n, \mu \times m) > 2a_n \text{ for some $n > N$} \}$.  By Lemma \ref{lem: OpenSetOfCocycles}, each $\mathcal{U}_N$ is a union of open sets and therefore open.  Also, $\bigcap_{N} \mathcal{U}_N$ is exactly the set of $\a \in \operatorname{Co}(X)$ satisfying 
$\cov(\bar{Q}, T_\a, F_n, \mu \times m) > 2a_n$ infinitely often.  Therefore, by the Baire category theorem, in order to prove Theorem \ref{thm: TechnicalMainResult} it suffices to prove

\begin{prop} \label{prop: FinitaryVersionOfMainResult}
For each $N$, $\mathcal{U}_N$ is dense in $\operatorname{Co}(X)$.
\end{prop}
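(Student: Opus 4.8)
The plan is to show that, given any cocycle $\a$ and any basic open neighborhood of it — specified by a finite set $F \subseteq G$ and $\eta > 0$ — we can find a cocycle $\b$ that agrees with $\a$ on all of $F$ off a set of measure $< \eta$ (hence lies in the neighborhood) and has $\cov(\bar Q, T_\b, F_n, \mu \times m) > 2 a_n$ for some $n > N$. The freedom we exploit is that we are allowed to modify $\a_g$ for $g \notin F$ and also to perturb $\a_g$ for $g \in F$ on a small-measure set; so effectively we will choose the cocycle's behavior along a huge F\o lner set $F_n$ (for $n$ chosen very large, well beyond $N$) almost freely, subject only to the cocycle identity and to being a small perturbation on $F$.

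The key idea — borrowed from the slow-entropy philosophy and the construction in \cite{austin2021dominant} — is that to make the $\bar Q$-covering number large along $F_n$ we need the distribution of $(\bar Q, T_\b, F_n)$-names on $X \times I$ to be spread out among exponentially-in-$|F_n|$ (relative to the subexponential bound $a_n$) many well-separated Hamming balls. Concretely: I would use a Rokhlin-type tower lemma for the amenable action $T$ to find, for large $n$, a subset $S \subseteq X$ and a shape $F_n$ such that the translates $T^{F_n} S$ tile a large fraction of $X$; then along each tower I define the cocycle values $\b_g(x)$ (for $g$ in the shape) so that the string $(\pi(\b_g(x) t))_{g \in F_n}$, as a function of the ``extra coordinate'' $t \in I$, realizes a large, Hamming-separated family of binary words. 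The cleanest way to get such a family is to let $\b$ along the tower implement, in the $I$-fibre, an essentially independent coin flip at each coordinate $g$: then the $(\bar Q, T_\b, F_n)$-name is (close to) a uniform random element of $\{0,1\}^{F_n}$, whose pushforward charges no $o(2^{|F_n|})$ collection of $\epsilon$-Hamming balls with mass $\geq 1-\epsilon$ by a standard volume/concentration estimate. Since $a_n$ is subexponential in $|F_n|$, $2 a_n \ll 2^{|F_n|}$ eventually, giving the required lower bound for that $n$.

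More carefully, here is the order of steps I would carry out. (i) Fix $N$, $\a$, $F$, $\eta$. (ii) Choose $n > N$ large enough that $|F_n| \gg |F|$, that $2 a_n$ is far below the ``random word'' covering number of $\{0,1\}^{F_n}$, and that a Rokhlin/Ornstein--Weiss tower with shape $F_n$ (or a slightly enlarged shape) and error $< \eta/10$ exists — here I invoke the Ornstein--Weiss quasi-tiling machinery for amenable groups. (iii) Using the orbit-equivalence-relation picture of cocycles, redefine $\a$ to a new cocycle $\b$: on the ``base'' $S$ of the tower, declare the fibre maps along the tower coordinates to be a prescribed sequence of independent $m$-preserving involutions of $I$ that swap $[0,1/2)$ with $[1/2,1]$ in a ``generic'' (dyadically shifting) way, and extend by the cocycle identity; outside the good part of the tower, keep $\b = \a$. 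Because $F$ is finite and $F_n$ is enormously larger, the set of $x$ where $\b_g(x) \neq \a_g(x)$ for some $g \in F$ can be kept of measure $< \eta$, so $\b$ lies in the $(F,\eta)$-neighborhood of $\a$. (iv) Estimate the pushforward measure $\bigl(\bar Q_{T_\b, F_n}\bigr)_*(\mu \times m)$: on the good part of the tower it is (close to) uniform on $\{0,1\}^{F_n}$, so any $1-\epsilon$ fraction of its mass requires at least, say, $2^{(1-H(2\epsilon))|F_n|}$ Hamming $\epsilon$-balls — a quantity exceeding $2 a_n$ for $n$ large since $\frac{1}{|F_n|}\log a_n \to 0$ and $\epsilon = 1/100$. (v) Conclude $\cov(\bar Q, T_\b, F_n, \mu\times m) > 2 a_n$, so $\b \in \mathcal U_N$, proving density.

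The main obstacle I anticipate is step (iii): arranging the cocycle perturbation so that it is simultaneously (a) supported off $F$ up to small measure, (b) genuinely produces a near-uniform (or at least exponentially-spread) name distribution on $\{0,1\}^{F_n}$, and (c) compatible with the cocycle condition over the orbit relation. For $G = \Z$ this is routine (one builds the cocycle directly on a Rokhlin tower), but for a general amenable $G$ the combinatorics of the F\o lner shape $F_n$ — which need not tile $G$, only quasi-tile it — means one must be careful about how names along $F_n$ interact across neighboring tiles of the Ornstein--Weiss decomposition, and about the ``boundary'' points whose $F_n$-orbit exits the good region. I expect the resolution is exactly as in \cite{austin2021dominant}: work within a single tile, absorb the boundary effects into the $\epsilon$-slack (this is why $\epsilon$ was fixed to a definite small constant and why we only need the bound for \emph{some} large $n$, not all), and use that the entropy-type lower bound for a uniform-ish measure on $\{0,1\}^{F_n}$ degrades only by $o(|F_n|)$ under such perturbations. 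The other, more bookkeeping-level obstacle is making precise the ``independent coin flips in the fibre'' construction as a bona fide measurable cocycle; this is handled by the standard correspondence between cocycles into $\aut(I,m)$ and measurably-varying families of measure isomorphisms, together with the density of such ``dyadic'' maps noted in the topology discussion above.
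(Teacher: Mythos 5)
There is a genuine gap at your step (iii), and it is not a bookkeeping issue: the perturbation you describe is not small. If you redefine the fibre maps at (essentially) every coordinate of a tower whose levels cover most of $X$, then for a point $x$ in the interior of a tile and $f \in F$, the value $\b_f(x) = \b(x, T^f x)$ is a composition of the new ``random'' maps and has nothing to do with $\a_f(x)$. So the set $\{x : \b_f(x) \neq \a_f(x) \text{ for some } f \in F\}$ is essentially the whole tower, i.e.\ has measure close to $1$, not $<\eta$. The relative sizes of $F$ and $F_n$ are irrelevant here; the tension is between ``independent coin flip at each coordinate'' and ``agreement with $\a$ along $F$,'' and these are directly contradictory. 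This is exactly the difficulty your last paragraph gestures at, but absorbing it into the $\epsilon$-slack does not work: it is not a boundary effect.

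The paper's resolution is to randomize at a \emph{sparser} scale. Using hyperfiniteness of the orbit relation (Connes--Feldman--Weiss) rather than Ornstein--Weiss tilings, one chooses finite sub-equivalence relations $R_1 \subseteq R_2$ so that for most $x$ the cell $R_1(x)$ contains $T^F x$ and has size $\leq K$, while $R_2(x)$ contains $T^{F_n}x$. The new cocycle is left \emph{equal to} $\a_0$ on all of $R_1$ --- this is what puts it in the $(F,\eta)$-neighborhood --- and independence is introduced only \emph{between} the $R_1$-cells inside each $R_2$-cell. The $(\bar Q,T_\a,F_n)$-name of $(x,t)$ then splits into at least $(1-2\eta)|F_n|/K$ independent blocks, each of size at most $K$; Hoeffding's inequality applied to these block random variables shows every $\epsilon$-Hamming ball has fibre measure at most $\exp(-|F_n|/8K^2)$, so the covering number is at least $\tfrac12\exp(|F_n|/8K^2) > 2a_n$ for $n$ chosen large (using only that $a_n$ is subexponential --- you do not need the near-uniform distribution or the full $2^{(1-H(2\epsilon))|F_n|}$ bound). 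Your overall architecture (perturb within a neighborhood, force an exponentially spread name distribution, compare with the subexponential $a_n$) is the right one, but without the ``keep the cocycle intact on small cells containing $T^F x$, randomize only across cells'' device, the construction as written does not produce a cocycle in the prescribed neighborhood. A secondary gap is measurability of the randomized cocycle over the orbit relation, which the paper handles with a measurable selector argument; your appeal to ``the standard correspondence'' would need to be made precise.
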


\noindent The proof of this proposition is the content of the next section.

\section{Proof of Proposition \ref{prop: FinitaryVersionOfMainResult}}  \label{sec: ConstructionSection}

\subsection{Setup}
Let $N$ be fixed and let $\a_0$ be an arbitrary cocycle.  Consider an arbitrary neighborhood of $\a_0$ determined by a finite set $F \subseteq G$ and $\eta > 0$.  We can assume without loss of generality that $\eta \ll \epsilon = 1/100
$.  We will produce a new cocycle $\a \in \mathcal{U}_N$ such that there is a set $X'$ of measure $\geq 1-\eta$ on which $\a_f(x) = (\a_0)_f(x)$ for all $f \in F$.  The construction of such an $\a$ is based on the fact that the orbit equivalence relation $R$ is hyperfinite.

\begin{thm}[\!\!{\cite[Theorem 10]{connes1981amenable}}] \label{thm: Hyperfinite}
There is an increasing sequence of equivalence relations $R_n \subseteq X \times X$ such that
\begin{itemize}
    \item each $R_n$ is measurable as a subset of $X \times X$,
    \item every cell of every $R_n$ is finite, and
    \item $\bigcup R_n$ agrees $\mu$-a.e. with $R$.
\end{itemize}
\end{thm}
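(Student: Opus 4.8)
The statement is the hyperfiniteness of the orbit relation of a free action of a countable amenable group, i.e.\ the Connes--Feldman--Weiss theorem in the group-action case; the plan is to construct the $R_n$ directly by iterating an Ornstein--Weiss Rokhlin/quasi-tiling argument, forcing the relations to be nested by always working relative to the previously constructed one.

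First I would fix an exhaustion $K_0 \subseteq K_1 \subseteq \cdots$ of $G$ by finite symmetric sets with $K_0 = \{e\}$ and $\bigcup_n K_n = G$, together with a summable sequence of errors $\delta_n > 0$. The construction is inductive, starting from $R_0 = \Delta_X$ (the diagonal). Assume a measurable subequivalence relation $R_n \subseteq R$ with all classes finite has been built. The inductive step is to produce a measurable $R_{n+1}$ with $R_n \subseteq R_{n+1} \subseteq R$, all classes still finite, and
\[
\mu \bigl\{ x \in X : (x, T^g x) \in R_{n+1} \text{ for all } g \in K_{n+1} \bigr\} \ > \ 1 - \delta_{n+1}.
\]
This is the relative Rokhlin lemma. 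One applies the Ornstein--Weiss quasi-tiling machinery --- valid for amenable equivalence relations, and in particular for the quotient $R/R_n$, which inherits amenability from $G$ because $R_n$ has finite classes --- to obtain finitely many finite shapes that are $(K_{n+1},\delta_{n+1})$-invariant together with base sets whose translates tile a subset of $X$ of measure $> 1-\delta_{n+1}$. Enlarging each tile to the union of the $R_n$-classes it meets (which changes the covered measure negligibly) produces the classes of $R_{n+1}$, and on the untiled remainder one simply keeps $R_{n+1} = R_n$. By construction $R_{n+1}$ contains $R_n$, is contained in $R$, and has finite classes (finite unions of finite $R_n$-classes), while the near-invariance of the shapes yields the displayed estimate.

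Having built the sequence, the $R_n$ are measurable, have finite classes, and satisfy $R_n \subseteq R_{n+1} \subseteq R$ by construction, so only $\bigcup_n R_n = R$ $\mu$-a.e.\ remains to be checked. For fixed $g \in G$, choose $n_0$ with $g \in K_n$ for all $n \geq n_0$; then $\mu\{ x : (x, T^g x) \notin R_n \} < \delta_n$ for $n \geq n_0$, and since $\sum_n \delta_n < \infty$, Borel--Cantelli shows that for $\mu$-a.e.\ $x$ one has $(x, T^g x) \in R_n$ for all sufficiently large $n$. Intersecting the resulting full-measure sets over the countable group $G$, for $\mu$-a.e.\ $x$ and every $y$ with $(x,y) \in R$ we get $(x,y) \in R_n$ for large $n$; combined with $\bigcup_n R_n \subseteq R$ this gives the a.e.\ equality.

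The main obstacle is the inductive (relative) quasi-tiling step: producing $R_{n+1}$ that simultaneously contains $R_n$, keeps finite classes, and is $K_{n+1}$-saturated on most of $X$. The subtlety is that a fresh F\o lner-based tiling need not respect the earlier relation, so one must run the Ornstein--Weiss construction on the quotient $R/R_n$ (equivalently, build the new tiles out of whole $R_n$-classes), verifying that saturating the tiles by $R_n$-classes costs only $o(1)$ in measure and does not destroy near-invariance. If one is content to quote the Ornstein--Weiss Rokhlin lemma in its relative form as a black box, the remainder of the argument is just the indexing and Borel--Cantelli bookkeeping above.
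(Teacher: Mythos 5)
The paper does not prove this statement; it is quoted verbatim from Connes--Feldman--Weiss \cite{connes1981amenable}, so there is no internal argument to compare against. Your sketch follows the standard Ornstein--Weiss route for the free group-action case: iterate a relative Rokhlin/quasi-tiling lemma to produce an increasing chain of finite-class measurable subrelations that eventually $K$-saturate most of $X$, and then pass to the limit via a tail argument. That is indeed the natural proof of Theorem~\ref{thm: Hyperfinite} in the specific setting of a free action, and at the level of outline it is sound; CFW themselves work in the more abstract setting of amenable equivalence relations, but for orbit relations of amenable group actions the quasi-tiling argument you describe is the usual route.

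Two points of caution. First, the one step you explicitly black-box is the genuine content: producing a measurable $R_{n+1}$ with $R_n \subseteq R_{n+1} \subseteq R$, finite classes, and $\mu\{x : T^{K_{n+1}} x \subseteq R_{n+1}(x)\} > 1-\delta_{n+1}$. Your description of the fix --- ``enlarge each tile to the union of the $R_n$-classes it meets'' --- is not well-defined as stated, because two new tiles that meet a common straddling $R_n$-class must be merged, and such mergers can a priori cascade into an infinite class. The correct resolution, which you gesture at with the parenthetical about $R/R_n$ but do not carry out, is to select base points along a Borel transversal for $R_n$ and build new tiles that are unions of entire $R_n$-classes from the outset (assigning each straddling class to exactly one new tile once and for all). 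One then uses the strong F\o lner invariance of the new shapes relative to a bound on most $R_n$-class diameters to show the mass lost to the boundary is small. This is exactly where the measure-theoretic work lives, and a careful write-up has to do it. Second, a minor simplification: since the $R_n$ are nested, the bad sets $A_n(g) = \{x : (x,T^g x) \notin R_n\}$ are decreasing in $n$, so $\mu(\bigcap_n A_n(g)) = \lim_n \mu(A_n(g)) = 0$ once $\delta_n \to 0$; summability and Borel--Cantelli are not needed for the final step.
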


\noindent Fix such a sequence $(R_n)$ and for $x \in X$, write $R_n(x)$ to denote the cell of $R_n$ that contains $x$.

\begin{lem} \label{lem: CellsContainFOrbits}
There exists an $m_1$ such that $\mu \{ x \in X : T^F x \subseteq R_{m_1}(x) \} > 1-\eta$.
\end{lem}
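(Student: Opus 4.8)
The plan is to exploit the exhaustion $\bigcup_n R_n = R$ together with a union bound over the finitely many elements of $F$. For each $f \in F$ I would consider the set
\[
A_n^f \ := \ \{ x \in X : (x, T^f x) \in R_n \} \ = \ \{ x \in X : T^f x \in R_n(x) \},
\]
which is measurable because $R_n \subseteq X \times X$ is measurable and $x \mapsto (x, T^f x)$ is a measurable map $X \to X \times X$, and which is increasing in $n$ since the sequence $(R_n)$ is increasing. As $(x, T^f x) \in R$ for every $x$ and $\bigcup_n R_n$ agrees with $R$ almost everywhere, the union $\bigcup_n A_n^f$ is conull, so $\mu(A_n^f) \to 1$ as $n \to \infty$.

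Next, because $F$ is finite, I would pick $m_1$ large enough that $\mu(A_{m_1}^f) > 1 - \eta/|F|$ simultaneously for all $f \in F$. A union bound then gives
\[
\mu \left( \bigcap_{f \in F} A_{m_1}^f \right) \ \geq \ 1 - \sum_{f \in F} \mu(X \setminus A_{m_1}^f) \ > \ 1 - \eta,
\]
and every $x$ in this intersection satisfies $T^f x \in R_{m_1}(x)$ for all $f \in F$, i.e. $T^F x \subseteq R_{m_1}(x)$, which is exactly the assertion. The only point needing a little care is the measurability of the sets $A_n^f$ and the precise meaning of ``$\bigcup_n R_n$ agrees $\mu$-a.e. with $R$'' --- namely that for $\mu$-a.e. $x$ the cells $R_n(x)$ increase to the full $R$-orbit of $x$ --- both of which are immediate from Theorem \ref{thm: Hyperfinite}. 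I therefore do not anticipate any genuine obstacle; this lemma is a straightforward preliminary before the main construction.
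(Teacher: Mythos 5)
Your proof is correct and follows essentially the same route as the paper: both arguments use the a.e.\ exhaustion of the orbit relation by the $R_n$ together with continuity of measure along an increasing sequence of sets; the paper absorbs the quantification over $f \in F$ into a pointwise $m_x$ while you handle it with a union bound, which is an immaterial difference.
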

\begin{proof}
Almost every $x$ satisfies $T^G x = \bigcup_{m} R_m(x)$, so in particular, for $\mu$-a.e. $x$, there is an $m_x$ such that $T^F x \subseteq R_m(x)$ for all $m \geq m_x$.  Letting $X_\ell = \{x \in X : m_x \leq \ell\}$, we see that the sets $X_\ell$ are increasing and exhaust almost all of $X$.  Therefore we can pick $m_1$ so that $\mu(X_{m_1}) > 1-\eta$.   
\end{proof}

Now we drop $R_1, \dots, R_{m_1 - 1}$ from the sequence and assume that $m_1 = 1$.

\begin{lem}
There exists a $K$ such that $\mu\{x : |R_1(x)| \leq K \} > 1-\eta$.
\end{lem}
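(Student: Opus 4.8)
The plan is to run the same exhaustion argument as in the proof of Lemma~\ref{lem: CellsContainFOrbits}, now applied to the cardinalities of the cells of $R_1$ rather than to the indices $m_x$. For each integer $K \geq 1$ I would set $Y_K := \{x \in X : |R_1(x)| \leq K\}$. These sets increase with $K$, and together they exhaust $X$: by Theorem~\ref{thm: Hyperfinite} every cell of $R_1$ is finite, so $|R_1(x)|$ is a finite integer for every $x$, and hence $x \in Y_{|R_1(x)|}$. Thus $\bigcup_{K \geq 1} Y_K = X$, and once I know the $Y_K$ are measurable, continuity of measure from below gives $\mu(Y_K) \uparrow \mu(X) = 1$, so that $\mu(Y_K) > 1 - \eta$ for all sufficiently large $K$; any such $K$ is the one asserted by the lemma.

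The one point I would need to justify is that each $Y_K$ is measurable, equivalently that the cell-cardinality function $x \mapsto |R_1(x)|$ is measurable. This is a standard fact about countable measurable equivalence relations: by the Lusin--Novikov uniformization theorem $R_1$ may be written as a countable union $\bigcup_i \mathrm{graph}(\phi_i)$ of graphs of measurable partial injections $\phi_i$ of $X$ (see e.g.\ \cite{kechris2010global}), and from such a decomposition one checks directly that $\{x : |R_1(x)| \geq j\}$ is measurable for every $j$ --- it is the set of points $x$ admitting at least $j$ distinct $R_1$-related points inside the countable family $\{x\} \cup \{\phi_i(x), \phi_i^{-1}(x) : i \geq 1\}$ --- whence $Y_K = X \setminus \{x : |R_1(x)| \geq K+1\}$ is measurable.

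I do not expect any genuine obstacle here: the measurability verification is the only mildly technical step, and the conclusion itself is then an immediate application of continuity of measure. The role of the lemma is presumably to let the construction in the rest of the section work with $R_1$-cells of uniformly bounded size, at the cost of discarding a set of measure at most $\eta$.
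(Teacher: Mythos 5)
Your argument is the same as the paper's: define $Y_K = \{x : |R_1(x)| \leq K\}$, note these sets increase and exhaust $X$ because every $R_1$-cell is finite, and apply continuity of measure. The only difference is that you also justify measurability of $x \mapsto |R_1(x)|$ via Lusin--Novikov, a point the paper leaves implicit; your justification is correct.
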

\begin{proof}
Every $R_1$-cell is finite, so if we define $X_k = \{x \in X : |R_1(x)| \leq k \}$, then the $X_k$ are increasing and exhaust all of $X$.  So we pick $K$ so that $\mu(X_K) > 1-\eta$.
\end{proof}

Continue to use the notation $X_K = \{x \in X : |R_1(x)| \leq K \}$.

\begin{lem} \label{lem: FolnerSetContaintsMostlySmallCells}
For all $n$ sufficiently large, $\mu \{x \in X : \frac{|(T^{F_n} x) \cap X_K|}{|F_n|} > 1-2\eta \} > 1-\eta$.
\end{lem}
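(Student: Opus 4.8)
The statement is essentially a pointwise ergodic theorem assertion: the indicator function $\ind{X_K}$ has $\mu(X_K) > 1 - \eta$, so by the mean (or pointwise) ergodic theorem for the Følner sequence $(F_n)$, the averages $\frac{1}{|F_n|}\sum_{f \in F_n} \ind{X_K}(T^f x) = \frac{|(T^{F_n}x)\cap X_K|}{|F_n|}$ converge in $L^1(\mu)$ to $\mu(X_K) > 1 - \eta$. The plan is to exploit the gap between $\mu(X_K)$ and $1 - 2\eta$.

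First I would record the identity $\frac{|(T^{F_n}x)\cap X_K|}{|F_n|} = \frac{1}{|F_n|}\sum_{f \in F_n}\ind{X_K}(T^f x)$; here one should note (or not worry about, since $T$ is free) that $T^{F_n}x$ is counted with the correct multiplicity. Then by the mean ergodic theorem along the Følner sequence, this sequence of functions converges in $L^1(\mu)$ to the constant $\mu(X_K)$. Since $\mu(X_K) > 1 - \eta > 1 - 2\eta$, and since $L^1$ convergence to a constant $c$ implies that for any $\delta < c$ we have $\mu\{x : f_n(x) \le \delta\} \to 0$ (by Markov's inequality applied to $(c - f_n)^+ \le |c - f_n|$), we conclude that $\mu\{x : \frac{|(T^{F_n}x)\cap X_K|}{|F_n|} \le 1 - 2\eta\} \to 0$. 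In particular this measure is $< \eta$ for all $n$ sufficiently large, which is exactly the claim after taking complements.

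There is essentially no obstacle here beyond citing the correct form of the ergodic theorem for amenable group actions; the argument is the same one already used inside the proof of Proposition \ref{prop: UniformGrowthRate} (where the mean ergodic theorem is applied to $\ind{\{y : Q(y) \neq Q'(y)\}}$). The only minor point to be careful about is that we want a statement for \emph{all $n$ sufficiently large} rather than almost-every-$x$-eventually, so it is cleanest to use the $L^1$ (mean) version together with Markov's inequality rather than the pointwise version, exactly as above.
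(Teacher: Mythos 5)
Your proposal is correct and matches the paper's proof: both write $\frac{|(T^{F_n}x)\cap X_K|}{|F_n|}$ as the ergodic average of $1_{X_K}$, invoke the mean ergodic theorem to get convergence (in probability / in $L^1$) to $\mu(X_K) > 1-\eta$, and conclude that the exceptional set where the average is $\leq 1-2\eta$ has measure less than $\eta$ for all large $n$. The extra remark about using the $L^1$ version plus Markov rather than the pointwise theorem is exactly the right care to take for a "for all $n$ sufficiently large" statement.
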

\begin{proof}
We have $|(T^{F_n} x) \cap X_K| = \sum_{f \in F_n} 1_{X_K}(T^f x)$.  By the mean ergodic theorem, we get
\[
\frac{|(T^{F_n} x) \cap X_K|}{|F_n|} \ \to \ \mu(X_K) \ > \ 1-\eta \qquad \text{in probability as $n \to \infty$.}
\]
Therefore, in particular, $\mu \left\{ x \in X : \frac{|(T^{F_n} x) \cap X_K|}{|F_n|} > 1-2\eta \right\} \to 1$ as $n \to \infty$, so this measure is $>1-\eta$ for all $n$ sufficiently large.
%
\end{proof}

From now on, let $n$ be a fixed number that is large enough so that the above lemma holds, $n > N$, and $\frac12 \exp \left( \frac{1}{8K^2} \cdot |F_n| \right) > 2a_n$.  This is possible because $(a_n)$ is assumed to be subexponential in $|F_n|$.  The relevance of the final condition will appear at the end.

\begin{lem} \label{lem: Level2CellsContainFolnerOrbits}
There is an $m_2$ such that $\mu \{x \in X : T^{F_n} x \subseteq R_{m_2}(x) \} > 1-\eta$.
\end{lem}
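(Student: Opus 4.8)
The plan is to repeat, essentially verbatim, the argument used for Lemma \ref{lem: CellsContainFOrbits}, now with the (fixed) finite set $F_n$ playing the role of $F$. By Theorem \ref{thm: Hyperfinite}, the increasing union $\bigcup_m R_m$ agrees $\mu$-a.e.\ with the orbit equivalence relation $R$, and the $R$-class of a point $x$ is exactly its orbit $T^G x$. Hence for $\mu$-a.e.\ $x$ we have the increasing exhaustion $T^G x = \bigcup_m R_m(x)$. (One small bookkeeping point: we have already dropped $R_1, \dots, R_{m_1-1}$ and relabeled, but discarding finitely many initial terms of an increasing sequence of equivalence relations changes neither the monotonicity nor the union, so this reduction is harmless.)

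The key steps, in order: first, since $F_n$ is a \emph{finite} set, $T^{F_n} x$ is a finite subset of $T^G x$, so for $\mu$-a.e.\ $x$ there exists a least index $m_x$ with $T^{F_n} x \subseteq R_m(x)$ for all $m \geq m_x$. Second, put $X_\ell := \{x \in X : m_x \leq \ell\}$; because the $R_m(x)$ are nested in $m$, the sets $X_\ell$ are increasing in $\ell$, and by the first step they exhaust a conull subset of $X$. Third, by continuity of measure choose $m_2$ large enough that $\mu(X_{m_2}) > 1-\eta$; then every $x \in X_{m_2}$ satisfies $T^{F_n} x \subseteq R_{m_2}(x)$, which is the claim.

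I do not anticipate any real obstacle here: the statement is a direct finiteness-plus-exhaustion argument identical in structure to Lemma \ref{lem: CellsContainFOrbits}, and the only thing to be careful about is making explicit that $F_n$ is finite (so that a single $m_x$ works for the whole set $T^{F_n} x$) and that the relabeling of the $R_m$ from the earlier step does not affect the hypotheses of Theorem \ref{thm: Hyperfinite}.
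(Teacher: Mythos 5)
Your argument is correct and is exactly the paper's intended proof: the paper's own justification for this lemma is simply ``Same proof as Lemma \ref{lem: CellsContainFOrbits},'' and you have reproduced that exhaustion argument faithfully, including the harmless observation about the relabeling of the $R_m$.
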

\begin{proof}
Same proof as Lemma \ref{lem: CellsContainFOrbits}.
\end{proof}

Again, drop $R_2, \dots, R_{m_2-1}$ from the sequence of partitions and assume $m_2 = 2$.

\subsection{Construction of the perturbed cocycle}

Let $(R_n)$ be the relabeled sequence of equivalence relations from the previous section.  The following measure theoretic fact is well known.  Recall that two partitions $P$ and $P'$ of $I$ are said to be {\bf independent} with respect to $m$ if $m(E \cap E') = m(E)m(E')$ for any $E \in P$, $E' \in P'$.

\begin{lem} \label{lem: MakePartitionsIndependent}
Let $P$ and $P'$ be two finite partitions of $I$.  Then there exists a $\varphi \in \aut(I,m)$ such that $P$ and $\varphi^{-1}P'$ are independent with respect to $m$.
\end{lem}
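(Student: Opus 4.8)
The plan is to build $\varphi$ ``by hand'' from the basic structure theory of the nonatomic standard probability space $(I,m)$. Write $P = \{A_1,\dots,A_k\}$ and $P' = \{B_1,\dots,B_\ell\}$, and put $p_i := m(A_i)$ and $q_j := m(B_j)$. Two standard facts will be used: (i) any Borel subset $A \subseteq I$ can be partitioned into Borel pieces of prescribed measures summing to $m(A)$ (because $m|_A$ is nonatomic); and (ii) any two Borel subsets of $I$ of equal measure are isomorphic as measure spaces via a measure-preserving Borel bijection (all nonatomic standard probability spaces being isomorphic).

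First I would use (i) to slice each cell $A_i$ into Borel pieces $A_{i,1},\dots,A_{i,\ell}$ with $m(A_{i,j}) = p_i q_j$; this is legitimate since $\sum_{j} p_i q_j = p_i$. Setting $C_j := \bigcup_{i=1}^{k} A_{i,j}$ for $1 \leq j \leq \ell$, we obtain a Borel partition $\{C_1,\dots,C_\ell\}$ of $I$ with $m(C_j) = \sum_{i} p_i q_j = q_j = m(B_j)$. Now apply (ii) to pick, for each $j$, a measure-preserving Borel bijection $\varphi_j : C_j \to B_j$ (the choice being immaterial when $m(C_j) = 0$). Since the $C_j$ partition $I$ and the $B_j$ partition $I$, the maps $\varphi_j$ patch together to a single $\varphi \in \aut(I,m)$, and by construction $\varphi^{-1} B_j = C_j$ for every $j$. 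Finally, because $A_{i,j} \subseteq A_i$ while $A_{i',j}$ is disjoint from $A_i$ for $i' \neq i$, we get $A_i \cap \varphi^{-1} B_j = A_i \cap C_j = A_{i,j}$, hence $m(A_i \cap \varphi^{-1} B_j) = p_i q_j = m(A_i)\,m(B_j)$, which is exactly independence of $P$ and $\varphi^{-1} P'$.

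I do not anticipate a genuine obstacle: the content is entirely in facts (i) and (ii), which are consequences of the classification of standard Lebesgue spaces, together with the trivial observation that finitely many fibrewise measure-preserving bijections glue to an element of $\aut(I,m)$. One could alternatively first conjugate so that $P$ becomes the partition of $[0,1]$ into consecutive intervals of lengths $p_1,\dots,p_k$, but this reformulation does not meaningfully shorten the argument.
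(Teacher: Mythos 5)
Your argument is correct and complete: slicing each $A_i$ into pieces $A_{i,j}$ of measure $p_i q_j$, assembling $C_j = \bigcup_i A_{i,j}$, and gluing measure-preserving bijections $C_j \to B_j$ gives exactly the required $\varphi$, and the verification $m(A_i \cap \varphi^{-1}B_j) = p_i q_j$ is right. Note that the paper offers no proof at all here --- it records the lemma as a ``well known'' measure-theoretic fact --- so there is nothing to compare against; your write-up is the standard argument one would supply, resting only on the nonatomicity of $m$ and the isomorphism theorem for standard Lebesgue spaces.
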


\begin{prop} \label{prop: PerturbedCocycle}
For any $\a_0 \in \operatorname{Co}(X)$, there is an $\a \in \operatorname{Co}(X)$ such that
\begin{enumerate}
    \item $\a_g(x) = (\a_0)_g(x)$ whenever $(x, T^g x) \in R_1$, and
    \item for $\mu$-a.e. $x$, the following holds.  If $C$ is an $R_1$-cell contained in $R_2(x)$, consider the map $Y_C: t \mapsto \bar{Q}_{T_\a, \{g : T^g x \in C \}}(x,t)$ as a random variable on the underlying space $(I,m)$. Then as $C$ ranges over all such $R_1$-cells, the random variables $Y_C$ are independent.
    %
\end{enumerate}
\end{prop}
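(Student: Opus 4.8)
The plan is to build $\a$ one $R_2$-cell at a time, using $\a_0$ to fix the values "inside" $R_1$-cells (condition~(1)) and using Lemma~\ref{lem: MakePartitionsIndependent} to arrange the independence in condition~(2). Work with the equivalence relation picture: a cocycle is a map $\a\colon R\to\aut(I,m)$ satisfying $\a(x,z)=\a(y,z)\circ\a(x,y)$, and to specify $\a$ on an $R_2$-cell $D$ it suffices to fix one basepoint $x_D\in D$ and prescribe $\a(x_D,y)$ for every $y\in D$; the cocycle values between any other pair in $D$ are then forced, and one checks the cocycle identity is automatically satisfied. Since there are only countably many $R_2$-cells (up to the usual measurable selection of basepoints; measurability of the construction is routine because each $R_n$ is measurable with uniformly finite cells), it is enough to do this for a single cell $D=R_2(x)$.

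Here is the construction on $D$. Enumerate the $R_1$-cells inside $D$ as $C_0,C_1,\dots,C_{r-1}$ (finitely many, since $R_2$-cells are finite), with basepoints $z_j\in C_j$; take $x_D=z_0$. For $y$ in the same $R_1$-cell as $z_j$ we are \emph{required} by condition~(1) to set $\a(z_j,y)=(\a_0)(z_j,y)$, so the only freedom is in choosing the "transition" automorphisms $\psi_j:=\a(z_0,z_j)\in\aut(I,m)$ for $j=1,\dots,r-1$ (with $\psi_0=\id$). Having chosen these, every cocycle value on $D$ is determined: $\a(z_i,z_j)=\psi_j\circ\psi_i^{-1}$ and $\a(z_j,y)$ for $y\in C_j$ via condition~(1), and then $\a(y,y')=\a(z_{j'},y')\circ\a(z_j,z_{j'})\circ\a(z_j,y)^{-1}$ for $y\in C_j,\ y'\in C_{j'}$. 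Now observe that the random variable $Y_{C_j}\colon t\mapsto \bar Q_{T_\a,\{g: T^gx\in C_j\}}(x,t)$ depends, for the fixed basepoint $x=z_0$, only on the automorphism $\a(z_0,\cdot)$ restricted to $C_j$, i.e.\ on $\psi_j$ composed with the already-fixed maps $(\a_0)(z_j,y)$. Concretely $Y_{C_j}$ is the partition of $I$ into finitely many atoms obtained by intersecting the pullbacks $\big((\a_0)(z_j,y)\circ\psi_j\big)^{-1}\pi$ over $y\in C_j$; call this partition $\mathcal P_j(\psi_j)$, a measurable function of $\psi_j$. So condition~(2) asks exactly that the partitions $\mathcal P_0(\psi_0),\dots,\mathcal P_{r-1}(\psi_{r-1})$ be jointly independent under $m$.

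I would achieve joint independence by choosing the $\psi_j$ inductively. Suppose $\psi_0,\dots,\psi_{j-1}$ have been chosen so that $\mathcal P_0,\dots,\mathcal P_{j-1}$ are jointly independent; let $\mathcal P_{<j}=\bigvee_{i<j}\mathcal P_i$ be their (finite) common refinement. Pick any reference $\varphi_j\in\aut(I,m)$, giving a partition $\mathcal P_j(\varphi_j)$; by Lemma~\ref{lem: MakePartitionsIndependent} there is $\theta\in\aut(I,m)$ with $\mathcal P_j(\varphi_j)$ and $\theta^{-1}\mathcal P_{<j}$ independent. But note that replacing $\psi_j$ by $\psi_j$ pre-composed with an element of $\aut(I,m)$ amounts to relabeling the underlying coordinate $t$, which simultaneously transports \emph{all} of $\mathcal P_0,\dots,\mathcal P_{j-1}$ and their independence structure; the cleaner route is: apply Lemma~\ref{lem: MakePartitionsIndependent} in the form that there is a $\psi_j$ making $\mathcal P_{<j}$ and $\mathcal P_j(\psi_j)$ independent (one takes $\psi_j$ so that $\mathcal P_j(\psi_j)=\theta\,\mathcal P_j(\varphi_j)$ for a suitable $\theta$, which is possible since $t\mapsto \mathcal P_j(\cdot)$ ranges over all partitions with the prescribed atom-sizes as $\psi_j$ varies over $\aut(I,m)$, these sizes being unchanged because each $(\a_0)(z_j,y)$ is $m$-preserving). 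Two finite partitions being independent, together with $\mathcal P_0,\dots,\mathcal P_{j-1}$ already jointly independent, gives $\mathcal P_0,\dots,\mathcal P_j$ jointly independent, completing the induction. The main obstacle I anticipate is the bookkeeping around measurability and basepoint selection --- making sure the inductively-chosen $\psi_j$ can be selected \emph{measurably} in $x$ across all $R_2$-cells at once --- rather than the independence argument itself; this is handled by a standard measurable-selection argument using that $R_1\subseteq R_2$ are measurable with uniformly bounded finite cells, so the combinatorial type of $(D; C_0,\dots,C_{r-1})$ is a measurable function of $x$ taking countably many values, on each of which the above finite construction applies uniformly.
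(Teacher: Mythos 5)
Your proposal is correct and follows essentially the same route as the paper: fix $\a=\a_0$ inside $R_1$-cells, then choose the transition automorphisms between $R_1$-cell basepoints inductively, applying Lemma~\ref{lem: MakePartitionsIndependent} at each step to make the new pulled-back partition independent of the join of the previous ones, and handle measurability by decomposing $X$ according to the (countably many) combinatorial types of $R_2$-cells and using a measurable selector. The only detail worth adding is that independence must hold for \emph{every} base point $x$ in the $R_2$-cell, not just the chosen representative, which follows by pulling all the partitions back through the single measure-preserving map $\a(x,z_0)$.
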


\begin{proof}
We give here only an intuitive sketch of the proof and leave the full details to Appendix \ref{app: Measurability}.  It is more convenient to adopt the perspective of a cocycle as a map $\a: R \to \aut(I,m)$ satisfying the condition $\a(x,z) = \a(y,z) \circ \a(x,y)$.


{\bf Step 1.} For $(x,y) \in R_1$, let $\a(x,y) = \a_0(x,y)$. 

{\bf Step 2.}  Fix an $R_2$-cell $\bar{C}$.  Enumerate by $\{C_1, \dots, C_k\}$ all of the $R_1$-cells contained in $\bar{C}$ and choose from each a representative $x_i \in C_i$.  

{\bf Step 3.}  Let $\pi$ denote the partition $\{[0,1/2), [1/2, 1]\}$ of $I$.  Define $\a(x_1, x_2)$ to be an element of $\aut(I,m)$ such that 
\[
\bigvee_{y \in C_1} \a(x_1, y)^{-1} \pi \qquad \text{and} \qquad \a(x_1, x_2)^{-1} \left(\bigvee_{y \in C_2} \a(x_2, y)^{-1} \pi \right)
\]
are independent.  These expressions are well defined because $\a$ has already been defined on $R_1$ and we use Lemma \ref{lem: MakePartitionsIndependent} to guarantee that such an element of $\aut(I,m)$ exists.  

{\bf Step 4.}  There is now a unique way to extend the definition of $\a$ to $(C_1 \cup C_2) \times (C_1 \cup C_2)$ that is consistent with the cocycle conditition.  For arbitrary $y_1 \in C_1, y_2 \in C_2$, define
\begin{align*}
    \a(y_1, y_2) \ &= \ \a(x_2, y_2) \circ \a(x_1, x_2) \circ \a(y_1, x_1) \qquad \text{and} \\
    \a(y_2, y_1) \ &= \ \a(y_1, y_2)^{-1}.
\end{align*}
The middle term in the first equation was defined in the previous step and the outer two terms were defined in step 1.

{\bf Step 5.} Extend the definition of $\a$ to the rest of the $C_i$ inductively, making each cell independent of all the previous ones.  Suppose $\a$ has been defined on $\left( C_1 \cup \dots \cup C_j \right) \times \left( C_1 \cup \dots \cup C_j \right)$.  Using Lemma \ref{lem: MakePartitionsIndependent} again, define $\a(x_1, x_{j+1})$ to be an element of $\aut(I,m)$ such that
\[
\bigvee_{y \in C_1 \cup \dots \cup C_j} \a(x_1, y)^{-1}\pi \qquad \text{and} \qquad \a(x_1,x_{j+1})^{-1} \left( \bigvee_{y \in C_{j+1}} \a(x_{j+1}, y)^{-1}\pi  \right)
\]
are independent.  Then, just as in step 4, there is a unique way to extend the definition of $\a$ to all of $(C_1 \cup \dots \cup C_{j+1}) \times (C_1 \cup \dots \cup C_{j+1})$.  At the end of this process, $\a$ has been defined on all of $\bar{C} \times \bar{C}$.  This was done for an arbitrary $R_2$-cell $\bar{C}$, so now $\a$ is defined on $R_2$.  

{\bf Step 6.} For each $N \geq 2$, extend the definition of $\a$ from $R_N$ to $R_{N+1}$ with the same procedure, but there is no need to set up any independence.  Instead, every time there is a choice for how to define $\a$ between two of the cell representatives, just take it to be the identity.  This defines $\a$ on $\bigcup_{N \geq 1} R_N$, which is equal mod $\mu$ to the full orbit equivalence relation, so $\a$ is a well defined cocycle.

Now we verify the two claimed properties of $\a$.  Property (1) is immediate from step 1 of the construction.  To check property (2), fix $x$ and let $C_j$ be any of the $R_1$-cells contained in $R_2(x)$.  Note that the name $\bar{Q}_{T_\a, \{g : T^g x \in C_j\}}(x,t)$ records the data $\bar{Q}(T^g_\a(x,t)) = \bar{Q}(T^g x, \a_g(x) t) = \pi( \a_g(x) t)$ for all $g$ such that $T^g x \in C_j$, which, by switching to the other notation is the same data as $\pi(\a(x, y) t)$ for $y \in C_j$.  So, the set of $t$ for which $\bar{Q}_{T_\a, \{g : T^g x \in C_j\}}(x,t)$ is equal to a particular word is given by a corresponding particular cell of the partition $\bigvee_{y \in C_j} \a(x,y)^{-1} \pi = \a(x, x_1)^{-1} \left( \bigvee_{y \in C_j} \a(x_1, y)^{-1} \pi \right)$.  The construction of $\a$ was defined exactly so that the partitions $\bigvee_{y \in C_j} \a(x_1, y)^{-1} \pi$ are all independent and the names $\bar{Q}_{T_\a, \{g : T^g x \in _j\}}(x,t)$ are determined by these independent partitions pulled back by the fixed $m$-preserving map $\a(x, x_1)$, so they are also independent.

The reason this is only a sketch is because it is not clear that the construction described here can be done in a way so that the resulting $\a$ is a measurable function.  To do it properly requires a slightly different approach; see Appendix \ref{app: Measurability} for full details.
\end{proof}

Letting $\wt{X} = \{x \in X : T^F x \subseteq R_1(x) \}$, this construction guarantees that $\a_f(x) = (\a_0)_f(x)$ for all $f \in F, x \in \wt{X}$.  By Lemma \ref{lem: CellsContainFOrbits}, $\mu(\wt{X}) > 1-\eta$, so this shows that $\a$ is in the $(F,\eta)-$neighborhood of $\a_0$.

\subsection{Estimating the size of Hamming balls}

Let $\a$ be the cocycle constructed in the previous section.  We will estimate the $(\mu \times m)$-measure of $(\bar{Q}, T_\a, F_n)$-Hamming balls in order to get a lower bound for the covering number.  The following formulation of Hoeffding's inequality will be quite useful \cite[Theorem 2.2.6]{vershynin2018high}.

\begin{thm} \label{thm: Hoeffding}
Let $Y_1, \dots, Y_\ell$ be independent random variables such that each $Y_i \in [0,K]$ almost surely.  Let $a = \E{\sum Y_i}$.  Then for any $t>0$,
\[
\P\left(  \sum_{i=1}^{\ell} Y_i < a - t \right) \ \leq \ \exp\left( - \frac{2t^2}{K^2 \ell} \right).
\]
\end{thm}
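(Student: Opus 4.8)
The plan is to prove the inequality by the standard exponential-moment (Chernoff) method, with the only substantive ingredient being Hoeffding's lemma bounding the moment generating function of a bounded, centered random variable.

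First I would pass to the centered variables $Z_i := \E{Y_i} - Y_i$. These are independent, each has mean zero, and each lies almost surely in an interval of length $K$, namely $[\E{Y_i} - K,\ \E{Y_i}]$. Since $a = \E{\sum Y_i} = \sum_i \E{Y_i}$, the event $\{\sum Y_i < a - t\}$ coincides with $\{\sum Z_i > t\}$. For any $\lambda > 0$, Markov's inequality applied to the nonnegative random variable $\exp(\lambda \sum_i Z_i)$, followed by independence, gives
\[
\P\left( \sum_{i=1}^\ell Z_i > t \right) \ \leq \ e^{-\lambda t}\, \E{\exp\left(\lambda \sum_{i=1}^\ell Z_i\right)} \ = \ e^{-\lambda t} \prod_{i=1}^\ell \E{e^{\lambda Z_i}}.
\]

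The key lemma is \emph{Hoeffding's lemma}: if $Z$ has $\E Z = 0$ and $Z \in [c, c+K]$ almost surely, then $\E{e^{\lambda Z}} \leq \exp(\lambda^2 K^2/8)$ for every $\lambda \in \R$. I would prove this by convexity of $z \mapsto e^{\lambda z}$: for $z \in [c, c+K]$,
\[
e^{\lambda z} \ \leq \ \frac{(c+K) - z}{K}\, e^{\lambda c} \ + \ \frac{z - c}{K}\, e^{\lambda(c+K)}.
\]
Taking expectations and using $\E Z = 0$ bounds $\E{e^{\lambda Z}}$ by $(1-p)\, e^{\lambda c} + p\, e^{\lambda(c+K)} = e^{L(h)}$, where $p := -c/K \in [0,1]$ (note $c \leq 0 \leq c+K$ since $Z$ is centered and bounded), $h := \lambda K$, and $L(h) := -ph + \log(1 - p + p e^h)$. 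A direct computation gives $L(0) = 0$, $L'(0) = 0$, and $L''(h) = u(1-u)$ with $u := p e^h / (1 - p + p e^h) \in [0,1]$, so $L''(h) \leq 1/4$ everywhere; Taylor's theorem with Lagrange remainder then yields $L(h) \leq h^2/8 = \lambda^2 K^2/8$.

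Applying Hoeffding's lemma to each $Z_i$ gives $\prod_{i=1}^\ell \E{e^{\lambda Z_i}} \leq \exp(\lambda^2 K^2 \ell / 8)$, so
\[
\P\left( \sum_{i=1}^\ell Y_i < a - t \right) \ \leq \ \exp\left( -\lambda t + \frac{\lambda^2 K^2 \ell}{8} \right).
\]
It remains to optimize the free parameter: as a function of $\lambda > 0$ the exponent is minimized at $\lambda = 4t/(K^2 \ell)$, where it equals $-2t^2/(K^2 \ell)$, giving exactly the claimed bound. The main obstacle is Hoeffding's lemma, and specifically the estimate $L'' \leq 1/4$; everything else is bookkeeping. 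One can alternatively recognize $L''(h)$ as the variance of a Bernoulli random variable with success probability $u$, which is at most $1/4$, bypassing the explicit derivative computation.
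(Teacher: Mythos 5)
Your proof is correct, and since the paper does not prove this result but simply cites Vershynin's \emph{High-Dimensional Probability} (Theorem 2.2.6), your Chernoff-plus-Hoeffding's-lemma argument is essentially the standard proof given in that reference. The centering, the convexity bound, the computation $L'' = u(1-u) \le 1/4$, Taylor with Lagrange remainder, and the optimization $\lambda = 4t/(K^2\ell)$ are all carried out correctly.
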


Let $X_0 = \left\{x \in X : \frac{|(T^{F_n} x) \cap X_K|}{|F_n|} > 1-2\eta \text{ and } T^{F_n} x \subseteq R_2(x) \right\}$.  By Lemmas \ref{lem: FolnerSetContaintsMostlySmallCells} and \ref{lem: Level2CellsContainFolnerOrbits}, $\mu(X_0) > 1 - 2\eta$.  Also write $\mu \times m = \int m_x \,d\mu(x)$, where $m_x = \delta_x \times m$.

\begin{prop} \label{prop: MeasureOfHammingBall}
For any $(x,t) \in X_0 \times I$, 
\beq 
m_x\left( \B(\bar{Q}, T_\a, F_n, (x,t),\epsilon) \right) \ \leq \ \exp\left( - \frac{1}{8K^2} \cdot |F_n| \right).
\eeq
\end{prop}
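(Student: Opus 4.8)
The plan is to reduce the statement to a concentration estimate and then apply Hoeffding's inequality (Theorem \ref{thm: Hoeffding}), with the independent bounded summands extracted from property (2) of Proposition \ref{prop: PerturbedCocycle}. Since $m_x = \delta_x \times m$, the quantity to bound is
\[
m_x\big( \B(\bar{Q}, T_\a, F_n, (x,t),\epsilon) \big) \ = \ m\big\{ t' \in I : d_{F_n}\!\big( \bar{Q}_{T_\a,F_n}(x,t'),\, \bar{Q}_{T_\a,F_n}(x,t) \big) < \epsilon \big\};
\]
moreover the $f$-th coordinate of $\bar{Q}_{T_\a,F_n}(x,s)$ is $\bar{Q}(T^f x, \a_f(x) s)$, which agrees $m$-a.e.\ with $\pi(\a_f(x) s)$. (We may also shrink $X_0$ by a $\mu$-null set, without changing $\mu(X_0) > 1-2\eta$, so that property (2) of Proposition \ref{prop: PerturbedCocycle} holds at every point of $X_0$.)

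First I would decompose $F_n$ according to $R_1$-cells. Since $x \in X_0$ we have $T^{F_n} x \subseteq R_2(x)$, so every $T^f x$ with $f \in F_n$ lies in an $R_1$-cell contained in $R_2(x)$. Enumerate the $R_1$-cells meeting $T^{F_n} x$ as $C_1, \dots, C_p$ and set $F_n^{(j)} := \{ f \in F_n : T^f x \in C_j \}$, which partition $F_n$. By freeness $|F_n^{(j)}| = |T^{F_n} x \cap C_j| \le |C_j|$; call $j$ \emph{small} if $C_j \subseteq X_K$, equivalently $|C_j| \le K$, so that $|F_n^{(j)}| \le K$ for small $j$, and let $J$ be the set of small indices. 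Because $X_K$ is a union of $R_1$-cells, $\bigsqcup_{j \in J} F_n^{(j)} = \{ f \in F_n : T^f x \in X_K \}$, and the condition $x \in X_0$ gives $\sum_{j \in J} |F_n^{(j)}| = |T^{F_n} x \cap X_K| > (1-2\eta)|F_n|$.

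With $t$ fixed, I would introduce for each small $j$ the random variable on $(I,m)$
\[
Z_j(t') \ := \ \sum_{f \in F_n^{(j)}} 1_{\{ \pi(\a_f(x) t') \neq \pi(\a_f(x) t) \}} \ \in \ [0, |F_n^{(j)}|] \subseteq [0,K].
\]
Since $F_n^{(j)} \subseteq \{ g : T^g x \in C_j \}$, each $Z_j$ is a deterministic function of $Y_{C_j}(t') = \bar{Q}_{T_\a, \{g : T^g x \in C_j\}}(x,t')$ (the constant $Y_{C_j}(t)$ being just a parameter), and every $C_j$ is an $R_1$-cell inside $R_2(x)$; so property (2) of Proposition \ref{prop: PerturbedCocycle} makes $(Z_j)_{j \in J}$ independent. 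Moreover each $\a_f(x)$ preserves $m$ and $m\{ s : \pi(s) = i \} = 1/2$, whence $\E{1_{\{\pi(\a_f(x)t') \neq \pi(\a_f(x)t)\}}} = 1/2$, so that $a := \sum_{j \in J} \E{Z_j} = \tfrac12 \sum_{j \in J} |F_n^{(j)}| > \tfrac{1-2\eta}{2}|F_n|$.

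Finally, if $t'$ lies in the set appearing in the first display then $\sum_{j \in J} Z_j(t') \le \sum_{f \in F_n} 1_{\{\pi(\a_f(x)t') \neq \pi(\a_f(x)t)\}} < \epsilon |F_n|$, so with $\ell := |J| \le |F_n|$ and $t_0 := a - \epsilon|F_n| > (\tfrac12 - \eta - \epsilon)|F_n| \ge \tfrac14 |F_n|$ (using $\eta \ll \epsilon = 1/100$), Theorem \ref{thm: Hoeffding} applied to $(Z_j)_{j \in J}$ would give
\begin{align*}
m_x\big( \B(\bar{Q}, T_\a, F_n, (x,t),\epsilon) \big) \ &\le \ \P\Big( \sum_{j \in J} Z_j < a - t_0 \Big) \ \le \ \exp\!\Big( -\frac{2 t_0^2}{K^2 \ell} \Big) \\
&\le \ \exp\!\Big( -\frac{2 (|F_n|/4)^2}{K^2 |F_n|} \Big) \ = \ \exp\!\Big( -\frac{|F_n|}{8K^2} \Big).
\end{align*}
The one step that genuinely requires care is the passage from property (2) of Proposition \ref{prop: PerturbedCocycle} to the concrete independence and uniform $[0,K]$-bound for the $Z_j$: this is exactly why $F_n$ is split along $R_1$-cells sitting inside the single $R_2$-cell $R_2(x)$, and why $x \in X_0$ (controlling both $T^{F_n}x \subseteq R_2(x)$ and the proportion of coordinates lying in small cells) is the right hypothesis. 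The remainder is bookkeeping ensuring the deviation $t_0$ is at least $|F_n|/4$, which is what makes the exponent come out to $|F_n|/(8K^2)$.
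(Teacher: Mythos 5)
Your proposal is correct and follows essentially the same route as the paper: split $F_n$ along the $R_1$-cells inside $R_2(x)$, discard the large cells using $x\in X_0$ to keep at least $(1-2\eta)|F_n|$ coordinates, invoke property (2) of Proposition \ref{prop: PerturbedCocycle} for independence of the cell-wise counts, and apply Hoeffding with deviation exceeding $|F_n|/4$. The only (harmless) additions are your explicit remarks on the $m$-a.e.\ identification of $\bar{Q}$ with $\pi$ and on discarding a null set from $X_0$, which the paper leaves implicit.
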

\begin{proof} Let $\mathcal{C}$ be the collection of $R_1$-cells $C$ that meet $T^{F_n} x$ and satisfy $|C| \leq K$.  For each $C \in \mathcal{C}$, let $F_C = \{f \in F_n : T^f x \in C \}$.  Define
\[
Y(t') \ = \ |F_n| \cdot d_{F_n} \left( \bar{Q}_{T_\a, F_n}(x,t), \bar{Q}_{T_\a, F_n}(x,t') \right) \ = \ \sum_{f \in F_n} 1_{\bar{Q}(T_\a^f(x,t)) \neq \bar{Q}(T_\a^f(x,t'))},
\]
and for each $C \in \mathcal{C}$, define
\[
Y_C(t') \ = \ \sum_{f \in F_C} 1_{\bar{Q}(T_\a^f(x,t)) \neq \bar{Q}(T_\a^f(x,t'))}.
\]
Then we have
\[
Y(t') 
\ \geq \ \sum_{C \in \mathcal{C}} Y_C(t'),
\]
so to get an upper bound for $m_x \left( \B(\bar{Q}, T_\a, F_n, (x,t),\epsilon) \right) = m \left\{ t' : Y(t') < \epsilon |F_n| \right\}$, it is sufficient to control $m \left\{ t' : \sum_{C \in \mathcal{C}} Y_C(t') < \epsilon |F_n| \right\}$.

View each $Y_C(t')$ as a random variable on the underlying probability space $(I,m)$.  Our construction of the cocycle $\a$ guarantees that the collection of names $\bar{Q}_{T_\a, F_C}(x,t')$ as $C$ ranges over all of the $R_1$-cells contained in $R_2(x)$ is an independent collection.  Therefore, in particular, the $Y_C$ for $C \in \mathcal{C}$ are independent (the assumption that $x \in X_0$ guarantees that all $C \in \mathcal{C}$ are contained in $R_2(x)$). 

We also have that each $Y_C \in [0,K]$ and the expectation of the sum is
\begin{align*}
a \ &:= \ \sum_{C \in \mathcal{C}} \int Y_C(t') \,dm(t') \ = \ \sum_{C \in \mathcal{C}} \sum_{f \in F_C} \int 1_{\bar{Q}(T_\a^f(x,t)) \neq \bar{Q}(T_\a^f(x,t'))} \,dm(t') \ = \ \sum_{C \in \mathcal{C}} \frac{1}{2} |F_C| \\
&= \ \frac12 \sum_{C \in \mathcal{C}}|C \cap (T^{F_n} x)| \ > \ \frac12 (1-2\eta)|F_n|,
\end{align*}
where the final inequality is true because $x \in X_0$.  So, we can apply Theorem \ref{thm: Hoeffding} with $t = a - \epsilon|F_n|$ to conclude
\begin{align*}
m \left\{ t' : \sum_{C \in \mathcal{C}} Y_C(t') < \epsilon|F_n| \right\} \ &\leq \ \exp\left(  \frac{-2t^2}{K^2 |\mathcal{C}|}  \right) \ \leq \ \exp \left(  \frac{-2(1/2 - \eta - \epsilon)^2 |F_n|^2}{K^2 |F_n|}  \right) \\
&\leq \  \exp\left( - \frac{1}{8K^2} \cdot |F_n| \right).
\end{align*}
The final inequality holds because $\epsilon = 1/100$ and $\eta \ll \epsilon$ is small enough so that $1/2 - \eta - \epsilon > 1/4$.
\end{proof}

\begin{cor}
Let $y \in X_0$. If $B$ is any $(\bar{Q}, T_\a, F_n)$-Hamming ball of radius $\epsilon$, then $m_y(B) \leq \exp\left( -\frac{1}{8K^2} \cdot |F_n| \right)$.
\end{cor}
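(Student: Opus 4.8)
The plan is to observe that the Corollary is already proved by the argument given for Proposition \ref{prop: MeasureOfHammingBall}: that proof never used that the Hamming ball was centred in the slice $\{y\}\times I$, only that its centre determines a \emph{fixed} word in $\{0,1\}^{F_n}$ and that $y\in X_0$. So first I would recast $B$ as the preimage under the name map $\bar{Q}_{T_\a, F_n}$ of the $d_{F_n}$-ball of radius $\epsilon$ about a fixed word $w\in\{0,1\}^{F_n}$, namely $w = \bar{Q}_{T_\a, F_n}(x_0,t_0)$ where $(x_0,t_0)$ is the centre of $B$. Then $m_y(B) = m\{t' : d_{F_n}(\bar{Q}_{T_\a, F_n}(y,t'), w) < \epsilon\}$.

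Next I would run the proof of Proposition \ref{prop: MeasureOfHammingBall} with $y$ in the role of $x$ and $w$ in the role of the centre name $\bar{Q}_{T_\a, F_n}(x,t)$. Concretely, let $\mathcal{C}$ be the collection of $R_1$-cells meeting $T^{F_n}y$ of size $\leq K$, let $F_C = \{f\in F_n : T^f y\in C\}$ for $C\in\mathcal{C}$, and set $Y_C(t') = \sum_{f\in F_C} 1_{\bar{Q}(T_\a^f(y,t')) \neq w(f)}$ together with $Y(t') = \sum_{f\in F_n} 1_{\bar{Q}(T_\a^f(y,t')) \neq w(f)}$. The inequality $Y\geq\sum_{C\in\mathcal{C}} Y_C$, the bound $Y_C\in[0,K]$, the independence of $\{Y_C\}_{C\in\mathcal{C}}$ as random variables on $(I,m)$ (which holds because $y\in X_0$ forces every $C\in\mathcal{C}$ into $R_2(y)$, where Proposition \ref{prop: PerturbedCocycle}(2) supplies independence of the relevant names), and $|\mathcal{C}|\leq|F_n|$ are all literally unchanged. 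The one line that needs re-reading is the expectation computation: since $\a_f(y)$ preserves $m$ and $\pi$ bisects $I$, the event $\{t' : \pi(\a_f(y)t')\neq w(f)\}$ has measure exactly $\frac12$ regardless of the value $w(f)$, so $\E{\sum_{C\in\mathcal{C}}Y_C} = \frac12\sum_{C\in\mathcal{C}}|F_C| = \frac12|(T^{F_n}y)\cap X_K| > \frac12(1-2\eta)|F_n|$, exactly as before. Feeding this into Hoeffding's inequality (Theorem \ref{thm: Hoeffding}) with $t = a - \epsilon|F_n| > \frac14|F_n|$ (using $\eta\ll\epsilon=1/100$) yields $m_y(B) \leq m\{\sum_{C\in\mathcal{C}} Y_C < \epsilon|F_n|\}\leq \exp(-\frac{1}{8K^2}\cdot|F_n|)$.

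I do not anticipate a genuine obstacle: the content is exactly the remark that the estimate in Proposition \ref{prop: MeasureOfHammingBall} depends on the centre of $B$ only through a fixed word, and that the key mean computation uses only the uniform distribution of $\pi(\a_f(y)\,\cdot\,)$, not any relation between that word and $y$. If one prefers to cite the Proposition as a black box rather than re-run its proof, an alternative is the following: if $B$ meets $\{y\}\times I$ at some point $(y,t_1)$, then the triangle inequality gives $B\cap(\{y\}\times I)\subseteq \B(\bar{Q}, T_\a, F_n,(y,t_1),2\epsilon)$, and the Proposition's proof applies with radius $2\epsilon$ because $\eta\ll\epsilon=1/100$ still keeps $\frac12-\eta-2\epsilon>\frac14$; this route costs a harmless factor of $2$ in the radius and is slightly less clean, which is why I would present the direct argument instead.
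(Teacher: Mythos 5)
Your proposal is correct. Your ``alternative'' route is in fact exactly the paper's proof: if $B$ meets the fiber over $y$ at some $(y,s)$, the triangle inequality in $(\{0,1\}^{F_n}, d_{F_n})$ gives $B \subseteq \B(\bar{Q}, T_\a, F_n, (y,s), 2\epsilon)$, and Proposition \ref{prop: MeasureOfHammingBall} is then rerun with $2\epsilon$ in place of $\epsilon$, the constant $\frac{1}{8K^2}$ surviving because $\tfrac12 - \eta - 2\epsilon > \tfrac14$. Your primary argument is a genuine, if minor, variant: rather than recentering the ball at a point of the fiber, you note that the Hoeffding estimate depends on the centre only through a fixed word $w \in \{0,1\}^{F_n}$, since for each $f$ the event $\{ t' : \pi(\a_f(y) t') \neq w(f) \}$ has $m$-measure exactly $\tfrac12$ whatever the value $w(f)$ is (as $\a_f(y)$ preserves $m$ and $\pi$ bisects $I$), and the independence of the $Y_C$ supplied by Proposition \ref{prop: PerturbedCocycle}(2) for $y \in X_0$ is unaffected by replacing the centre name with $w$. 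This buys a slightly cleaner statement --- no radius dilation and no need to recheck the constant --- at the cost of restating the Proposition's proof rather than citing it as a black box. Both routes rest on the same two facts, so the difference is essentially presentational; either is acceptable.
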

\begin{proof}
If $B$ does not meet the fiber above $y$, then obviously $m_y(B) = 0$.  So assume $(y,s) \in B$ for some $s \in I$.  Then applying the triangle inequality in the space $(\{0,1\}^{F_n}, d_{F_n})$ shows that $B \subseteq \B(\bar{Q}, T_\a, F_n, (y,s), 2\epsilon)$.  Now apply Proposition \ref{prop: MeasureOfHammingBall} with $2\epsilon$ in place of $\epsilon$.  The proof goes through exactly the same and we get the same constant $1/8K^2$ in the final estimate because $\epsilon$ and $\eta$ are small enough so that $1/2-\eta - 2\epsilon$ is still $> 1/4$.
\end{proof}

\begin{cor}
We have $\cov(\bar{Q}, T_\a, F_n, \mu \times m) \geq \frac12 \exp(\frac{1}{8K^2} \cdot |F_n|)$.
\end{cor}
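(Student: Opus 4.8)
The plan is to combine the per-fiber estimate from the preceding corollary with a union bound and Fubini's theorem over the disintegration $\mu\times m=\int m_x\,d\mu(x)$. Concretely, let $B_1,\dots,B_L$ be any collection of $(\bar Q,T_\a,F_n)$-Hamming balls of radius $\epsilon$ whose union has $(\mu\times m)$-measure at least $1-\epsilon$; the goal is to show $L>\tfrac12\exp(\tfrac{1}{8K^2}|F_n|)$, which is exactly a lower bound for the covering number.

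First I would write
\[
1-\epsilon \ \le \ (\mu\times m)\!\left(\bigcup_{i=1}^L B_i\right) \ = \ \int m_x\!\left(\bigcup_{i=1}^L B_i\right) d\mu(x)
\]
and split the integral over $X_0$ and $X\setminus X_0$. On $X\setminus X_0$ I bound the integrand trivially by $1$, contributing at most $\mu(X\setminus X_0)<2\eta$, since $\mu(X_0)>1-2\eta$ by Lemmas \ref{lem: FolnerSetContaintsMostlySmallCells} and \ref{lem: Level2CellsContainFolnerOrbits}. On $X_0$ I use subadditivity $m_x(\bigcup_i B_i)\le\sum_{i=1}^L m_x(B_i)$ together with the previous corollary, which gives $m_x(B_i)\le\exp(-\tfrac{1}{8K^2}|F_n|)$ for every $x\in X_0$; this contributes at most $L\exp(-\tfrac{1}{8K^2}|F_n|)$. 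Combining, $1-\epsilon\le L\exp(-\tfrac{1}{8K^2}|F_n|)+2\eta$, hence $L\ge(1-\epsilon-2\eta)\exp(\tfrac{1}{8K^2}|F_n|)$.

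Finally, because $\epsilon=1/100$ and $\eta\ll\epsilon$, we have $1-\epsilon-2\eta>1/2$, so $L>\tfrac12\exp(\tfrac{1}{8K^2}|F_n|)$; as the covering family was arbitrary, this yields $\cov(\bar Q,T_\a,F_n,\mu\times m)\ge\tfrac12\exp(\tfrac{1}{8K^2}|F_n|)$. I do not expect a genuine obstacle here — the argument is a union bound plus Fubini — and the only points requiring care are that the fibers over $X\setminus X_0$ must be discarded using the crude estimate $\mu(X\setminus X_0)<2\eta$ rather than the Hamming-ball bound, and that the numerical slack in $\epsilon$ and $\eta$ suffices to absorb this loss and still leave the constant $\tfrac12$. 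Note also that, by the choice of $n$ made at the start of the subsection, $\tfrac12\exp(\tfrac{1}{8K^2}|F_n|)>2a_n$, so this corollary is precisely what certifies that the constructed $\a$ lies in $\mathcal U_N$.
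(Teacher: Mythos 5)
Your proof is correct and follows essentially the same route as the paper's: disintegrate $\mu\times m$ over $X$, discard the fibers over $X\setminus X_0$ at a cost of $2\eta$, apply the fiberwise bound from the preceding corollary together with a union bound on $X_0$, and use $1-\epsilon-2\eta>1/2$ to conclude. No differences worth noting.
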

\begin{proof}
Let $\{ B_i \}_{i=1}^\ell$ be a collection of $(\bar{Q}, T_\a,  F_n)$-Hamming balls of radius $\epsilon$ such that 
\[
(\mu \times m) \left( \bigcup B_i \right) \ > \ 1-
\epsilon.
\]
Then 
\begin{align*}
    1-
    \epsilon \ &< \ (\mu \times m) \left( \bigcup B_i \right) \ = \ (\mu \times m) \left( \bigcup B_i \cap (X_0 \times I) \right) + (\mu \times m) \left( \bigcup B_i \cap (X_0^c \times I) \right) \\
    &< \ \sum_{i=1}^\ell (\mu \times m)(B_i \cap (X_0 \times I)) + (\mu \times m)(X_0^c \times I) \\
    &< \ \sum_{i=1}^\ell \int_{y\in X_0} m_y(B_i) \,d\mu(y) + 2\eta \\
    &< \ \ell \cdot \exp\left( -\frac{1}{8K^2} \cdot |F_n| \right) + 2\eta,
\end{align*}
implying that $\ell > (1-
\epsilon-2\eta) \exp\left( \frac{1}{8K^2} \cdot |F_n| \right) > (1/2) \exp\left( \frac{1}{8K^2} \cdot |F_n| \right)$.
\end{proof}

Our choice of $n$ at the beginning now guarantees that 
$\cov(\bar{Q}, T_\a, F_n, \mu \times m) \geq \frac12 \exp(\frac{1}{8K^2} \cdot |F_n|) > 2a_n$, showing that $\a \in \mathcal{U}_N$ as desired.  This completes the proof of Proposition \ref{prop: FinitaryVersionOfMainResult}.

\appendix

\section{Measurability of the perturbed cocycle} \label{app: Measurability}
In this section, we give a more careful proof of Proposition \ref{prop: PerturbedCocycle} that addresses the issue of measurability.  We will need to use at some point the following measurable selector theorem \cite[Proposition 433F]{fremlin2003}.

\begin{thm} \label{thm: MeasurableSelector}
Let $(\Omega_1, \mathcal{F}_1)$ and $(\Omega_2, \mathcal{F}_2)$ be standard Borel spaces.  Let $\P$ be a probability measure on $(\Omega_1, \mathcal{F}_1)$ and suppose that $f: \Omega_2 \to \Omega_1$ is measurable and surjective.  Then there exists a measurable selector $g: \Omega_1 \to \Omega_2$ which is defined $\P$-a.e. (meaning $g(\omega) \in f^{-1}(\omega)$ for $\P$-a.e. $\omega \in \Omega_1$).
\end{thm}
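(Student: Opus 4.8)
The plan is to realize $g$ as a uniformization of the graph of $f$ via the Jankov--von Neumann selection theorem, and then to upgrade the resulting universally measurable map to one that is Borel off a $\P$-null set.

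\textbf{Step 1: the graph of $f$ is Borel and projects onto $\Omega_1$.} Since $\Omega_1$ is standard Borel, countably many Borel sets $\{B_n\}$ separate its points, so the diagonal $\Delta = \bigcap_n\bigl((B_n\times B_n)\cup(B_n^c\times B_n^c)\bigr)$ is a Borel subset of $\Omega_1\times\Omega_1$. Consequently the graph $G:=\{(\omega_2,\omega_1)\in\Omega_2\times\Omega_1 : f(\omega_2)=\omega_1\} = (f\times\mathrm{id})^{-1}(\Delta)$ is a Borel subset of $\Omega_2\times\Omega_1$, and its projection to $\Omega_1$ equals $f(\Omega_2)=\Omega_1$ because $f$ is surjective.

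\textbf{Step 2: uniformize via Jankov--von Neumann.} By the Jankov--von Neumann uniformization theorem (see e.g.\ \cite[Theorem 18.1]{kechris1995classical}, or the form in \cite[Proposition 433F]{fremlin2003}), any analytic---in particular any Borel---subset of a product of standard Borel spaces whose projection to the first coordinate is everything admits a uniformizing function $g:\Omega_1\to\Omega_2$, meaning $(g(\omega),\omega)\in G$ for every $\omega$, which is measurable with respect to the $\sigma$-algebra $\mathcal{A}$ on $\Omega_1$ generated by the analytic sets. By definition of $G$, this says $f(g(\omega))=\omega$ for all $\omega\in\Omega_1$. This step is the genuine content; everything else is bookkeeping.

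\textbf{Step 3: pass to $\P$-measurability.} Every analytic subset of a standard Borel space is universally measurable (Lusin), so $\mathcal{A}$ is contained in the $\P$-completion $\mathcal{F}_1^{\mathbb{P}}$ of $\mathcal{F}_1$; hence $g:(\Omega_1,\mathcal{F}_1^{\mathbb{P}})\to(\Omega_2,\mathcal{F}_2)$ is measurable. If one insists on a genuinely Borel selector defined off a null set, fix a Borel embedding $\iota:\Omega_2\hookrightarrow\{0,1\}^{\N}$; then for each $k$ the set $E_k:=\{\omega : \iota(g(\omega))_k = 1\}$ lies in $\mathcal{F}_1^{\mathbb{P}}$, so there is a Borel $\tilde E_k$ with $\P(E_k\,\triangle\,\tilde E_k)=0$. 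Choosing a Borel $\P$-null set $N\supseteq\bigcup_k(E_k\,\triangle\,\tilde E_k)$, the Borel map $\omega\mapsto(\mathbbm{1}_{\tilde E_k}(\omega))_k$ agrees with $\iota\circ g$ on $\Omega_1\setminus N$, hence takes values in $\iota(\Omega_2)$ there; composing with $\iota^{-1}$ yields a Borel map $\tilde g:\Omega_1\setminus N\to\Omega_2$ with $f(\tilde g(\omega))=\omega$ for all $\omega\notin N$, which is the asserted selector.

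\textbf{Expected difficulty.} There is essentially one nontrivial ingredient, namely the Jankov--von Neumann theorem invoked in Step 2, which rests on the Lusin-scheme analysis of analytic sets together with the universal-measurability fact already used in Step 3; we will cite it rather than reprove it. The Borelness of the graph, the completion argument, and the modification on a null set are routine, so no further obstacle is anticipated.
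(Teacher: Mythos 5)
The paper does not prove Theorem~\ref{thm: MeasurableSelector}; it is stated as a black box and attributed to Fremlin \cite[Proposition 433F]{fremlin2003}. So there is no ``paper's own proof'' to compare against. Your proposal supplies a genuine proof, and it is correct: the graph of a Borel map between standard Borel spaces is Borel (your separating-family argument for Borelness of the diagonal is standard), the Jankov--von Neumann uniformization theorem gives a selector measurable with respect to the $\sigma$-algebra generated by analytic sets, universal measurability of analytic sets pushes this into the $\P$-completion, and the coordinatewise modification via an embedding into $\{0,1\}^{\N}$ correctly produces a genuinely Borel selector off a Borel $\P$-null set. This is essentially the same circle of ideas (von Neumann selection plus completion) that underlies Fremlin's 433F, so while you are not literally reproducing the cited source, you are following the canonical route rather than a different one. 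One small remark: the modification in Step~3 is more than the statement in the paper strictly demands --- ``measurable and defined $\P$-a.e.'' is already delivered by Step~2 plus universal measurability --- but building the honest Borel-on-a-conull-set version is the cleaner formulation and costs nothing.
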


Given $x \in X$, there is a natural bijection between $T^G x$ and $G$ because $T$ is a free action.  We can also identify subsets -- if $E \subseteq T^G x$, then we will write $\wt{E} := \{g \in G : T^g x \in E \}$.  Note that this set depends on the ``base point'' $x$.  If $x$ and $y$ are two points in the same $G$-orbit, then the set $\wt{E}$ based at $x$ is a translate of the same set based at $y$.  It will always be clear from context what the intended base point is.

\begin{defn}
A {\bf pattern} in $G$ is a pair $(H,\mathscr{C})$, where $H$ is a finite subset of $G$ and $\mathscr{C}$ is a partition of $H$.  
\end{defn}

\begin{defn}
For $x \in X$, define $\pat_n(x)$ to be the pattern $(H,\sC)$, where $H = \wt{R_n(x)}$ and $\mathscr{C}$ is the partition of $H$ into the sets $\wt{C}$ where $C$ ranges over all of the $R_{n-1}$-cells contained in $R_n(x)$.  
\end{defn}

\begin{lem}
$\pat_n(x)$ is a measurable function of $x$.
\end{lem}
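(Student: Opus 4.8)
The plan is to show that $\pat_n$ is measurable by unwinding the definition into two pieces — the finite set $H = \wt{R_n(x)}$ and the partition $\sC$ of $H$ coming from the $R_{n-1}$-cells — and checking each is measurable in $x$. Since patterns $(H,\sC)$ with $H$ finite form a countable set, it suffices to verify that for each fixed pattern $(H_0,\sC_0)$, the set $\{x : \pat_n(x) = (H_0,\sC_0)\}$ is measurable. I would first reduce the statement to a claim about the equivalence relations themselves: for each fixed $g \in G$, the condition ``$T^g x \in R_n(x)$'', i.e. ``$(x, T^g x) \in R_n$'', defines a measurable set of $x$, because $R_n$ is measurable as a subset of $X \times X$ (Theorem \ref{thm: Hyperfinite}) and $x \mapsto (x, T^g x)$ is a measurable map $X \to X \times X$. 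Likewise, for each pair $g, g' \in G$ the set $\{x : (T^g x, T^{g'} x) \in R_{n-1}\}$ is measurable by the same argument applied to $R_{n-1}$ and the map $x \mapsto (T^g x, T^{g'} x)$.

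Next I would assemble these building blocks. For a fixed finite $H_0 \subseteq G$, the event $\{x : \wt{R_n(x)} = H_0\}$ is $\bigcap_{g \in H_0}\{x : (x,T^g x)\in R_n\} \cap \bigcap_{g \notin H_0}\{x : (x,T^g x)\notin R_n\}$; the second intersection is over the infinite set $G \setminus H_0$, but it is still a countable intersection of measurable sets, hence measurable. (Here I am using that $T$ is free so that $g \mapsto T^g x$ is injective and $\wt{R_n(x)}$ is genuinely well-defined.) On this event, the partition $\sC$ of $H_0$ is determined by which pairs $g,g' \in H_0$ satisfy $(T^g x, T^{g'} x)\in R_{n-1}$: two elements $g,g'$ lie in the same block of $\sC$ exactly when $T^g x$ and $T^{g'} x$ lie in the same $R_{n-1}$-cell. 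So for a fixed partition $\sC_0$ of $H_0$, the set $\{x : \wt{R_n(x)} = H_0,\ \sC = \sC_0\}$ is the intersection of $\{x : \wt{R_n(x)} = H_0\}$ with $\bigcap_{g \sim_{\sC_0} g'}\{x : (T^g x, T^{g'} x)\in R_{n-1}\} \cap \bigcap_{g \not\sim_{\sC_0} g'}\{x : (T^g x, T^{g'} x)\notin R_{n-1}\}$, a finite intersection of measurable sets. Finally, since the collection of all patterns supported on finite subsets of the countable group $G$ is countable, $\pat_n$ is a countable-to-one measurable assignment: it is measurable as a function into the (countable, discrete) space of patterns.

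The only genuinely delicate point is bookkeeping about base points and freeness: the identification of subsets of $T^G x$ with subsets of $G$ via $g \mapsto T^g x$ relies on $T$ being free, and one should note this is only an a.e.\ statement, which is harmless since we only need $\pat_n$ measurable (defined a.e.). There is also a minor subtlety that $\bigcup_n R_n$ only agrees with $R$ mod $\mu$, but this does not affect measurability of each individual $\pat_n$. I do not expect any real obstacle here — the whole argument is a routine ``measurable operations on measurable sets'' verification — so I would keep the written proof short: state that $x \mapsto (x, T^g x)$ and $x\mapsto(T^g x, T^{g'}x)$ are measurable, observe that $R_n, R_{n-1}$ are measurable subsets of $X\times X$, and conclude that the preimage of each (of countably many) patterns is measurable.
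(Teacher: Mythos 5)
Your proposal is correct and follows essentially the same route as the paper: reduce to the countably many patterns, write $\{x:\pat_n(x)=(H_0,\sC_0)\}$ as a countable intersection of sets of the form $\{x:(x,T^gx)\in R_n\}$ and $\{x:(T^gx,T^{g'}x)\in R_{n-1}\}$ (and their complements), and invoke measurability of $R_n$, $R_{n-1}$ and the maps $T^g$. The only cosmetic difference is that you restrict the non-equivalence conditions to pairs inside $H_0$ (using $R_{n-1}\subseteq R_n$) where the paper intersects over all of $G^2$; both are valid.
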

\begin{proof}
Because there are only countably many possible patterns, it is enough to fix a pattern $(H, \mathscr{C})$ and show that $\{x : \pat_n(x) = (H, \sC) \}$ is measurable.  Enumerate $\sC = \{C_1, \dots, C_k \}$.  Saying that $\pat_n(x) = (H,\sC)$ is the same as saying that $T^H x = R_n(x)$ and each $T^{C_i}x$ is a cell of $R_{n-1}$.  We can express the set of $x$ satisfying this as
\begin{align*}
&\left( \bigcap_{i=1}^{k} \bigcap_{g,h \in C_i} \{x : (T^g x, T^h x) \in R_{n-1} \} \quad \cap \quad \bigcap_{(g, h) \in G^2 \setminus \bigcup (C_i \times C_i)} \{x : (T^g x, T^h x) \not\in R_{n-1} \} \right) \quad \cap \\
&\left( \bigcap_{g \in H} \{x : (x, T^g x) \in R_n \} \quad \cap \quad \bigcap_{g \not\in H} \{x : (x,T^g x) \not\in R_n \}  \right).
\end{align*}
Because each $R_n$ is a measurable set and each $T^g$ is a measurable map, this whole thing is measurable.
\end{proof}

For each pattern $(H,\sC)$, let $X_{H,\sC}^{(n)} \ = \ \{x \in X : \pat_n(x) = (H, \sC) \}$.  We will define our cocycle $\a$ inductively on the equivalence relations $R_n$.  For each $n$, the sets $X_{H,\sC}^{(n)}$ partition $X$ into countably many measurable sets, so it will be enough to define $\a$ measurably on each $X_{H,\sC}^{(n)}$.  At this point, fix a pattern $(H,\sC)$, fix $n = 2$, and write $X_{H, \sC}$ instead of $X_{H,\sC}^{(2)}$.  Define
\begin{align*}
    \Omega_2^{H,\sC} \ &= \ \left\{ \psi : H \times H \to \aut(I,m) : \psi(h_1, h_3) = \psi(h_2, h_3) \circ \psi(h_1, h_2) \ \text{for all} \ h_1,h_2,h_3 \in H \right\}, \\
    \Omega_1^{H,\sC} \ &= \ \left\{ \sigma : \bigcup_{C \in \sC} C \times C \to \aut(I,m) : \sigma(g_1, g_3) = \sigma(g_2, g_3) \circ \sigma(g_1, g_2) \ \text{for all} \ g_1,g_2,g_3 \in G \right\}, \\
    \Omega_2^{H,\sC,\text{ind}} \ &= \ \left\{ \psi \in \Omega_2^{H,\sC} : \psi \ \text{is $(H,\sC)$-independent} \right\},
\end{align*}
where $\psi \in \Omega_2^{H,\sC}$ is said to be {\bf $(H,\sC)$-independent} if for any fixed $h_0 \in H$, the partitions
\[
\bigvee_{h \in C} \psi(h_0, h)^{-1} \pi
\]
as $C$ ranges over $\sC$ are independent with respect to $m$.

\begin{prop}
For every $\sigma \in \Omega_1^{H,\sC}$, there is some $\psi \in \Omega_2^{H,\sC,\text{ind}}$ that extends $\sigma$.
\end{prop}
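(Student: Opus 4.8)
The plan is to extend $\sigma$ step by step, exactly mirroring Steps 2 through 5 of the sketch in Proposition \ref{prop: PerturbedCocycle}, but now tracking at each stage that we stay inside $\Omega_2^{H,\sC}$ and, at the end, land in $\Omega_2^{H,\sC,\text{ind}}$. Enumerate $\sC = \{C_1, \dots, C_k\}$ and pick once and for all a representative $h_i \in C_i$ for each $i$. The given $\sigma$ already defines $\psi$ on each $C_i \times C_i$ (and these are required to be independent cocycles, so they are consistent). What remains is to define $\psi(h_1, h_j)$ for $j = 2, \dots, k$; once these ``bridge'' values are chosen, the cocycle identity forces a unique extension to all of $H \times H$ via $\psi(g, g') = \psi(h_{j'}, g') \circ \psi(h_1, h_{j'}) \circ \psi(h_1, h_j)^{-1} \circ \psi(h_j, g)$ when $g \in C_j$, $g' \in C_{j'}$ — and one checks routinely that this is well-defined and satisfies the cocycle identity on all of $H$.

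First I would dispose of the extension-uniqueness bookkeeping: given bridge values $\psi(h_1, h_j)$, the formula above gives the only possible extension consistent with the cocycle condition, and verifying the three-term identity $\psi(g_1, g_3) = \psi(g_2, g_3) \circ \psi(g_1, g_2)$ for arbitrary $g_1, g_2, g_3 \in H$ is a direct substitution using that $\sigma$ is already a cocycle on each block. Second, I would choose the bridge values inductively so as to force independence. Suppose $\psi$ has been defined on $(C_1 \cup \dots \cup C_j) \times (C_1 \cup \dots \cup C_j)$ with the partitions $\bigvee_{h \in C_i} \psi(h_1, h)^{-1}\pi$, $i = 1, \dots, j$, jointly independent. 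Consider the partition $P = \bigvee_{i \le j}\bigvee_{h \in C_i} \psi(h_1,h)^{-1}\pi$ and the partition $P' = \bigvee_{h \in C_{j+1}} \psi(h_{j+1}, h)^{-1}\pi$ of $I$. By Lemma \ref{lem: MakePartitionsIndependent} there is $\varphi \in \aut(I,m)$ with $P$ and $\varphi^{-1}P'$ independent; set $\psi(h_1, h_{j+1}) := \varphi$. Then $\bigvee_{h \in C_{j+1}} \psi(h_1, h)^{-1}\pi = \varphi^{-1}P'$ is independent of $P$, which gives joint independence of all $j+1$ partitions (independence of a new $\sigma$-algebra from the join of the previous ones is exactly joint independence). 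After $k-1$ such steps we obtain $\psi$ with the partitions $\bigvee_{h\in C}\psi(h_1,h)^{-1}\pi$, $C \in \sC$, independent.

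Third, I would verify that independence with respect to the anchor $h_1$ upgrades to independence with respect to an arbitrary $h_0 \in H$, which is what the definition of $(H,\sC)$-independent literally demands. This is where the one genuine observation lies: for any $h_0 \in H$ and any $C \in \sC$, the cocycle identity gives $\psi(h_0, h)^{-1}\pi = \bigl(\psi(h_1,h) \circ \psi(h_0,h_1)\bigr)^{-1}\pi = \psi(h_0,h_1)^{-1}\bigl(\psi(h_1,h)^{-1}\pi\bigr)$, so $\bigvee_{h\in C}\psi(h_0,h)^{-1}\pi = \psi(h_0,h_1)^{-1}\bigl(\bigvee_{h\in C}\psi(h_1,h)^{-1}\pi\bigr)$. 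Thus the family of partitions indexed by $h_0$ is obtained from the family indexed by $h_1$ by applying the single $m$-preserving map $\psi(h_0,h_1)^{-1}$ simultaneously; since applying a common measure-preserving transformation preserves joint independence, the $h_0$-family is independent too. Hence $\psi \in \Omega_2^{H,\sC,\text{ind}}$ and extends $\sigma$.

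I expect the main obstacle to be purely organizational rather than conceptual: keeping the cocycle identity straight while checking that the forced extension in the first step is well-defined (independent of which representatives are used) and that it genuinely restricts to $\sigma$ on each $C_i \times C_i$. The independence argument itself is short once one records the two facts that (i) independence of each new block from the join of the previous ones is the same as joint independence, and (ii) joint independence is invariant under applying a common $m$-preserving map — the latter being exactly what makes the ``anchor change'' from $h_1$ to $h_0$ harmless.
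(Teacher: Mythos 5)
Your proof is correct and follows essentially the same route as the paper's: define $\psi=\sigma$ on each block, choose the bridge maps $\psi(h_1,h_j)$ inductively via Lemma \ref{lem: MakePartitionsIndependent} so that each new block's partition is independent of the join of the previous ones, extend by the cocycle identity, and transport independence from the anchor $h_1$ to an arbitrary $h_0$ by the common measure-preserving map $\psi(h_0,h_1)$. The only blemish is a harmless index slip in your forced-extension formula (the last factor should be $\psi(g,h_j)$ rather than $\psi(h_j,g)$, i.e.\ its inverse), which does not affect the argument.
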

\begin{proof}
The idea is exactly the same as the construction described in steps 3-5 in the sketched proof of Proposition \ref{prop: PerturbedCocycle}, but we write it out here also for completeness.

Enumerate $\sC = \{C_1, \dots, C_k \}$ and for each $i$ fix an element $g_i \in C_i$.  First, obviously we will define $\psi = \sigma$ on each $C_i \times C_i$.  Next, define $\psi(g_1, g_2)$ to be an element of $\aut(I,m)$ such that 
\[
\bigvee_{g \in C_1} \sigma(g_1, g)^{-1} \pi \qquad \text{and} \qquad \psi(g_1, g_2)^{-1} \left(\bigvee_{g \in C_2} \sigma(g_2, g)^{-1} \pi \right)
\]
are independent.  Then, define $\psi$ on all of $(C_1 \cup C_2) \times (C_1 \cup C_2)$ by setting
\begin{align*}
    \psi(h_1, h_2) \ &= \ \sigma(g_2, h_2) \circ \psi(g_1, g_2) \circ \sigma(h_1, g_1) \qquad \text{and} \\
    \psi(h_2, h_1) \ &= \ \psi(h_1, h_2)^{-1}
\end{align*}
for any $h_1 \in C_1, h_2 \in C_2$.  Continue this definition inductively, making each new step independent of all the steps that came before it.  If $\psi$ has been defined on $\left( C_1 \cup \dots \cup C_j \right) \times \left( C_1 \cup \dots \cup C_j \right)$, then define $\psi(g_1, g_{j+1})$ to be an element of $\aut(I,m)$ such that
\[
\bigvee_{g \in C_1 \cup \dots \cup C_j} \psi(g_1, g)^{-1}\pi \qquad \text{and} \qquad \psi(g_1,g_{j+1})^{-1} \left( \bigvee_{g' \in C_{j+1}} \sigma(g_{j+1}, g')^{-1}\pi  \right)
\]
are independent.  Then extend the definition of $\psi$ to all of $(C_1 \cup \dots \cup C_{j+1}) \times (C_1 \cup \dots \cup C_{j+1})$ in the exact same way.  

At the end of this process, $\psi$ has been defined on $(C_1 \cup \cdots \cup C_k) \times (C_1 \cup \cdots \cup C_k) = H \times H$, and it satisfies the cocycle condition by construction.  To verify that it also satisfies the independence condition, notice that the construction has guaranteed that
\[
\bigvee_{h \in C} \psi(g_1, h)^{-1}\pi
\]
are independent partitions as $C$ ranges over $\sC$.  To get the same conclusion for an arbitrary base point $h_0$, pull everything back by the fixed map $\psi(h_0, g_1)$.  Because this map is measure preserving, pulling back all of the partitions by it preserves their independence.
\end{proof}

Now we would like to take this information about cocycles defined on patterns and use it to produce cocycles defined on the actual space $X$.  Define the map $\sigma^{H,\sC}: X_{H,\sC} \to \Omega_1^{H,\sC}$ by $\sigma^{H,\sC}_x(g_1, g_2) := \a_0(T^{g_1}x, T^{g_2} x)$.  Note that this is a measurable map because $\a_0$ is a measurable cocycle.

By Theorem \ref{thm: MeasurableSelector} applied to the measure $\P = (\sigma^{H, \sC})_* (\mu (\cdot \,|\, X_{H,\sC})) \in \prob(\Omega_1^{H,\sC})$, we get a \emph{measurable} map $E^{H,\sC}: \Omega_1^{H,\sC} \to \Omega_2^{H,\sC,\text{ind}}$ defined $\P$-a.e. such that $E^{H,\sC}(\sigma)$ extends $\sigma$.  Denote the composition $E^{H,\sC} \circ \sigma^{H,\sC}$ by $\psi^{H, \sC}$ and write the image of $x$ under this map as $\psi^{H, \sC}_x$.  To summarize, for every pattern $(H,\sC)$, there is a measurable map $\psi^{H,\sC} : X_{H,\sC} \to \Omega_2^{H,\sC,\text{ind}}$ defined $\mu$-a.e. with the property that $\psi^{H, \sC}_x$ extends $\sigma^{H,\sC}_x$.

It is now natural to define our desired cocycle $\a$ on the equivalence relation $R_2$ by the formula $\a(x, T^g x) := \psi^{\pat_2(x)}_x(e, g)$.  It is then immediate to verify the two properties of $\a$ claimed in the statement of Proposition \ref{prop: PerturbedCocycle}.  The fact that $\a$ agrees with $\a_0$ on $R_1$ follows from the fact that $\psi_{H,\sC}$ extends $\sigma^{H,\sC}$ and the claimed independence property of $\a$ translates directly from the independence property that the $\psi^{H,\sC}_x$ were constructed to have (see also the discussion after step 6 in the sketched proof of Proposition \ref{prop: PerturbedCocycle}).  Also, $\a$ is measurable because for each fixed $g$, the map $x \mapsto \a(x, T^g x)$ is simply a composition of other maps already determined to be measurable.  The only problem is that $\a$, when defined in this way, need not satisfy the cocycle condition.  To see why, observe that the cocycle condition $\a(x, T^h x) = \a(T^g x, T^h x) \circ \a(x, T^g x)$ is equivalent to the condition
\beq \label{eq: ModifiedCocycleCondition}
\psi_x^{\pat_2(x)}(e,h) \ = \ \psi_{T^g x}^{\pat_2(T^g x)}(e, hg^{-1}) \circ \psi_x^{\pat_2(x)}(e,g).
\eeq
But in defining the maps $\psi^{H,\sC}$, we have simply applied Theorem \ref{thm: MeasurableSelector} arbitrarily to each pattern separately, so $\psi^{\pat_2(x)}$ and $\psi^{\pat_2(T^g x)}$ have nothing to do with each other.  However, we can fix this problem with a little extra work, and once we do, we will have defined $\a: R_2 \to \aut(I,m)$ with all of the desired properties.

Start by declaring two patterns equivalent if they are translates of each other, and fix a choice of one pattern from each equivalence class.  Since there are only countably many patterns in total, there is no need to worry about how to make this choice.  For each representative pattern $(H_0, \sC_0)$, apply Theorem \ref{thm: MeasurableSelector} arbitrarily to get a map $\psi^{H_0, \sC_0}$.  This does not cause any problems because two patterns that are not translates of each other can not appear in the same orbit (this follows from the easy fact that $\pat_2(T^g x) = g^{-1} \cdot \pat_2(x)$), so it doesn't matter that their $\psi$ maps are not coordinated with each other.  For convenience, let us denote the representative of the equivalence class of $\pat_2(x)$ by $\repp(x)$.  Now for every $x \in X$, let $g^*(x)$ be the unique element of $G$ with the property that $\pat_2(T^{g^*(x)} x) = \repp(x)$.  Notice that the maps $g^*$ and $\repp$ are both constant on each subset $X_{H,\sC}$ and are therefore measurable. 

Now for an arbitrary pattern $(H,\sC)$ and $x \in X_{H,\sC}$, we define the map $\psi^{H,\sC}$ by 
\[
\psi_x^{H,\sC}(g,h) \ := \ \psi_{T^{g^*(x)} x}^{\repp(x)} ( g\cdot  g^*(x)^{-1}, h\cdot g^*(x)^{-1} ).
\]
Notice that this is still just a composition of measurable functions, so $\psi^{H,\sC}$ is measurable.  All that remains is to verify that this definition satisfies \eqref{eq: ModifiedCocycleCondition}.  The right hand side of \eqref{eq: ModifiedCocycleCondition} is
\begin{align*}
&\psi_{T^{g^*(T^g x)} T^g x}^{\repp(T^gx)} ( e g^*(T^g x)^{-1}, hg^{-1} g^*(T^g x)^{-1} ) \ \circ \ \psi_{T^{g^*(x)} x}^{\repp(x)} ( e g^*(x)^{-1}, g g^*(x)^{-1} ) \\
= \ &\psi_{T^{g^*(x)g^{-1}}T^g x}^{\repp(x)} ( (g^*(x)g^{-1})^{-1}, hg^{-1} (g^*(x)g^{-1})^{-1} ) \ \circ \ \ \psi_{T^{g^*(x)} x}^{\repp(x)} (g^*(x)^{-1}, g g^*(x)^{-1} ) \\
= \ &\psi_{T^{g^*(x)} x}^{\repp(x)} ( g g^*(x)^{-1} , hg^*(x)^{-1} ) \ \circ \ \psi_{T^{g^*(x)} x}^{\repp(x)} (g^*(x)^{-1}, g g^*(x)^{-1} ) \\
= \ &\psi_{T^{g^*(x)} x}^{\repp(x)}( g^*(x)^{-1}, hg^*(x)^{-1} ), 
\end{align*}
which is by definition equal to the left hand side of \eqref{eq: ModifiedCocycleCondition} as desired.

This, together with the discussion surrounding \eqref{eq: ModifiedCocycleCondition}, shows that if we construct the maps $\psi^{H,\sC}$ in this way, then making the definition $\a(x, T^g x) = \psi_x^{\pat_2(x)}(e,g)$ gives us a true measurable cocycle with all of the desired properties.  Finally, to extend the definition of $\a$ to $R_n$ with $n \geq 3$, repeat the exact same process, except it is even easier because there is no need to force any independence.  The maps $\psi^{H,\sC}$ only need to be measurable selections into the space $\Omega_2^{H,\sC}$, and then everything else proceeds in exactly the same way.

\bibliography{mybib}{}
\bibliographystyle{halpha}

\end{document}